\newcommand{\mbf}{\mathbf}                    
\newcommand{\msc}{\mathscr}
\newtheorem{thm}{Theorem} 
\newtheorem{cor}[thm]{Corollary}
\newtheorem{lm}[thm]{Lemma}
\newcommand{\la}{\langle}
\newcommand{\ra}{\rangle}
\newcommand{\op}{\operatorname}
\newcommand{\join}{\vee}
\renewcommand{\Join}{\bigvee}
\newcommand{\id}{\downarrow\!}
\newcommand{\vare}{\varepsilon}
\newcommand{\ifff}{\Longleftrightarrow}
\newcommand{\RA}{\Rightarrow}
\newcommand{\imp}{\!\implies\!}
\newcommand{\A}{\mathbf A}
\newcommand{\E}{\mathscr E}
\newcommand{\F}{\mathbf F}
\newcommand{\G}{\mathbf G}
\newcommand{\K}{\mathbf K}
\renewcommand{\L}{\mathbf L}
\newcommand{\M}{\mathbf M}
\renewcommand{\S}{\mathbf S}
\newcommand{\T}{\mathbf T}
\date{\today}
\keywords{quasivariety, congruence lattice, semilattice, representation}
\subjclass[2010]{08C15, 08A30, 06A12}
\begin{document}

\title[Q-lattices as congruence lattices]%
{Lattices of Quasi-equational Theories as Congruence Lattices of 
Semilattices with Operators, Part II}

\author {Kira Adaricheva}
\address{Department of Mathematical Sciences, Yeshiva University,
New York, NY 10016, USA}
\email{adariche@yu.edu}

\author {J. B. Nation}
\address{Department of Mathematics, University of Hawaii, Honolulu, HI
96822, USA}
\email{jb@math.hawaii.edu}

\thanks{The authors were supported in part by a grant from the U.S.~
Civilian Research \& Development Foundation.  The first author was also
supported in part by INTAS Grant N03-51-4110.}

\begin{abstract}
Part I proved that for every quasivariety $\msc K$ of structures 
(which may have both operations and relations)
there is a semilattice $\S$ with operators such that the lattice 
of quasi-equational theories of $\msc K$ (the dual of the lattice of
sub-quasivarieties of $\msc K$) is isomorphic to $\op{Con}(\S,+,0,\msc F)$.
It is known that if $\S$ is a join semilattice with $0$ (and no operators), 
then there is a quasivariety $\msc Q$ such that the lattice of theories 
of $\msc Q$ is isomorphic to $\op{Con}(\S,+,0)$.  We prove that
if $\S$ is a semilattice having both $0$ and $1$ with a group $\msc G$
of operators acting on $\S$, 
and each operator in $\msc G$ fixes both $0$ and $1$,
then there is a
quasivariety $\msc W$ such that the lattice of theories of $\msc W$
is isomorphic to $\op{Con}(\S,+,0,\msc G)$.
\end{abstract}

\maketitle

\section{Introduction}

In Part I, we proved that for every quasivariety $\msc K$ of 
structures there is a semilattice $\S$ with operators such that the lattice 
of quasi-equational theories containing the theory of $\msc K$ 
is isomorphic to $\op{Con}(\S,+,0,\msc F)$.
(An \emph{operator} is a $+,0$-endomorphism.)
In this second part, we will be concerned with trying to represent
a congruence lattice $\op{Con}(\S,+,0,\msc F)$ as a lattice of 
quasi-equational theories.  This is not always possible, because the
theory generated by the equation $x \approx y$ has special properties,
with no analogue in the congruence lattice of an arbitrary semilattice
with operators.  For example, the congruence lattice of
$\mbf \Omega = (\omega,\join,0,p)$, where $p(0)=0$ and $p(x)=x-1$ for $x>0$,
is isomorphic to $\omega + 1$.  This is not a lattice of quasi-equational
theories, because it does not support an equa-interior operator; see 
Section~5 below or the remarks after Theorem~15 of Part I.

Nonetheless, there are the following positive results.
\begin{itemize}
\item Gorbunov and Tumanov proved that if $\S$ is a join semilattice with $0$, 
then there is a quasivariety $\msc Q$ such that the lattice of theories of 
$\msc Q$ is isomorphic to $\op{Con}(\S,+,0)$ with no operators;  see \cite{GT0}.
\item In this paper, we prove that if $\S$ is a semilattice having both 
$0$ and $1$ with a group $\msc G$ of operators acting on $\S$, 
and each operator in $\msc G$ fixes both $0$ and $1$, then there is a
quasivariety $\msc W$ such that the lattice of theories of $\msc W$
is isomorphic to $\op{Con}(\S,+,0,\msc G)$.
In fact, the construction works for a slightly more general class of 
operators than groups, but still a rather special type of monoid.
\item  In a third part of this study, the second author shows that 
the congruence lattice $\op{Con}(\S,+,0,\msc F)$ of a semilattice with
operators can be represented as a lattice of implicational theories in
a language that may not contain a primitive equality relation~\cite{JBN08}.
\item  A fourth part of the study looks at the structure of lattices of 
atomic theories in languages without equality~\cite{HKNT}.
\end{itemize}

Let us review, from Gorbunov~\cite{VG2} or Part~I, how congruences work
on structures with both operations and relations.
A \emph{congruence} on a structure $\mbf A = \la A, \msc F^{\mbf A},
\msc R^{\mbf A} \ra$ is a pair $\theta = \la \theta_0, \theta_1 \ra$ where
\begin{itemize}
\item $\theta_0$ is an equivalence relation on $A$ that is compatible with
the operations of $\msc F^{\mbf A}$, and
\item $\theta_1 = \bigcup_{R \in \msc R} \theta_1^R$ where, for each
relation symbol $R \in \msc R$, 
\begin{enumerate}
\item[(a)] $R^{\mbf A} \subseteq \theta_1^R \subseteq  A^{\rho(R)}$, i.e., 
the original relations of $\mbf A$ are contained in those of $\theta_1$, and 
\item[(b)] if $\mbf a \in \theta_1^R$ and $\mbf b \in A^{\rho(R)}$ and
$\mbf a \,\theta_0\, \mbf b$ componentwise, then $\mbf b \in \theta_1^R$.
\end{enumerate}
\end{itemize}

For an atomic formula $\alpha$ on a structure $\A$ and a congruence $\theta$, 
let us write $\alpha \in \theta$ to mean either 
(1) $\alpha$ is $s \approx t$ where $(s,t) \in A^2$ and $(s,t) \in \theta_0$,
or (2) $\alpha$ is $R(\mbf s)$ where $\mbf s \in A^k$ and 
$\mbf s \in \theta_1^R$.  For a quasivariety $\msc K$ of structures,
let $\op{con}_{\msc K}(\alpha)$ denote the smallest $\msc K$-congruence
on $\A$ containing $\alpha$.

Let $\S$ be the semilattice of compact $\msc K$-congruences of 
a structure $\A$.
This is a join semilattice with zero.
Every endomorphism $\vare$ of $\A$ induces an endomorphism 
$\widehat{\vare}$ of $(\S,\join,0)$, as follows. 
The endomorphism acts componentwise on $\S^k$:  
$\vare(s_1,\dots,s_k) = (\vare s_1,\dots,\vare s_k)$.
Thus we can write $\vare \alpha$ to mean either $\vare s \approx \vare t$
or $R(\vare \mbf s)$, as appropriate.
Every compact $\msc K$-congruence $\varphi$ on $\A$ can be expressed as a 
finite join $\varphi = \Join_k \op{con}_{\msc K}(\alpha_k)$.
Define $\widehat{\vare}(\varphi) = \Join_k \op{con}_{\msc K}(\vare \alpha_k)$.
It is shown in Section~3 of
Part~I that $\widehat{\vare}$ is well-defined, and preserves
joins and $0$.  Thus $\widehat{\vare}$ is an \emph{operator} on $\S$,
i.e., a $(\join,0)$-endomorphism.
In our representation of lattices of quasi-equational theories
as congruence lattices of semilattices with operators, in Part~I,
the semilattices $\S$ are the compact $\msc K$-congruences of a 
$\msc K$-free structure $\F$, and the operators are those induced
on $\msc S$ by the endomorphisms of $\F$.

\section{Representations of $\op{Con}(\S,+,0)$}

This section describes ways to represent the congruence lattice of a
semilattice, say $\L = \op{Con}(\S,+,0)$, as the lattice $\op{QTh}(\msc B)$ 
of quasi-equational theories containing a theory $\msc B$.  
Note that the congruence lattice of a semilattice is coatomistic.  
Later on, we will modify the representations to fit $\op{Con}(\S,+,0, \msc F)$ 
where $\msc F$ is a sufficiently nice set of operators.

It will be convenient to
use a closely related type of relation, rather than congruences.
For an algebra $\A$ with a join semilattice reduct, let $\op{Eon}\,\A$ be 
the lattice of all reflexive, transitive, compatible relations $R$ such that
\begin{enumerate}
\item $R \subseteq \leq$, i.e., $x \,R\, y$ implies $x \leq y$, and
\item if $x \leq y \leq z$ and $x \,R\, z$, then $x \,R\, y$.
\end{enumerate}

\begin{lm}
If\/ $\A = \la A, +, 0, \msc F \ra$ is a semilattice with operators,
then\/ $\op{Con}\,\A \cong \op{Eon}\,\A$.
\end{lm}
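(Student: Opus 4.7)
The plan is to exhibit mutually inverse, order-preserving maps
$\Phi : \op{Con}\,\A \to \op{Eon}\,\A$ and $\Psi : \op{Eon}\,\A \to \op{Con}\,\A$.
Define $\Phi(\theta) = \theta \cap {\leq}$ and, dually, send
$R \in \op{Eon}\,\A$ to the relation
$$\Psi(R) \;=\; \{(x,y) : x\,R\,(x+y) \text{ and } y\,R\,(x+y)\}.$$
Both definitions are natural: $\Phi$ keeps the ``upward'' part of a
congruence, and $\Psi$ symmetrizes $R$ through the join, exploiting the
fact that on a semilattice $x\,\theta\,y$ iff $x\,\theta\,(x+y)\,\theta\,y$.

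First I would verify that $\Phi(\theta)$ satisfies the defining properties
of $\op{Eon}\,\A$. Reflexivity, transitivity, and condition~(1) are
immediate. Compatibility with the operators follows because each
$f \in \msc F$ preserves $+$ and hence is monotone, and because $\theta$
is a congruence. For the convexity condition~(2), suppose $x \leq y \leq z$
and $x\,\theta\,z$; adding $y$ to both sides and using $x+y = y$,
$z+y = z$ gives $y\,\theta\,z$, and then transitivity yields
$x\,\theta\,y$, so $x\,\Phi(\theta)\,y$.

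Next I would show $\Psi(R)$ is a congruence. Reflexivity and symmetry are
trivial. For transitivity, assume $x\,\theta_R\,y$ and $y\,\theta_R\,z$:
from $y\,R\,(y+z)$, compatibility of $R$ with $+$ gives
$x+y\,R\,x+y+z$; combining with $x\,R\,(x+y)$ by transitivity yields
$x\,R\,(x+y+z)$, and symmetrically $z\,R\,(x+y+z)$, which is exactly
what is needed once one observes $x+y+z = x+(x+y+z) = z+(x+y+z)$.
Compatibility with $+$ is a direct application of the compatibility of
$R$, and compatibility with each operator $f \in \msc F$ uses precisely
$f(x+y) = f(x)+f(y)$ to transport $x\,R\,(x+y)$ to
$f(x)\,R\,(f(x)+f(y))$.

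Finally I would check that the two maps are mutual inverses.
For $\Psi\circ\Phi = \op{id}$, the semilattice identity
$x\,\theta\,y \iff x\,\theta\,(x+y)\,\theta\,y$ shows that restricting to
${\leq}$ and then symmetrizing through joins recovers $\theta$. For
$\Phi\circ\Psi = \op{id}$, if $x \leq y$ then $x+y = y$, so
$(x,y) \in \Psi(R) \cap {\leq}$ reduces to $x\,R\,y$, and since $R
\subseteq {\leq}$ this yields $R$. Both $\Phi$ and $\Psi$ are visibly
order-preserving, hence they are mutually inverse lattice isomorphisms.
The only step that requires genuine care is verifying transitivity and
operator-compatibility of $\Psi(R)$; the rest is bookkeeping using the
two defining properties of $\op{Eon}$ and the fact that operators
preserve joins.
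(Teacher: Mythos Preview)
Your approach coincides with the paper's: the isomorphism is exactly $\theta \mapsto \theta \cap {\leq}$, and the paper simply cites Lemma~7 of Part~I for the verification you carry out in detail. Your inverse map $\Psi$ is the natural one.

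There is one genuine slip in your transitivity argument for $\Psi(R)$. You correctly derive $x\,R\,(x+y+z)$ and $z\,R\,(x+y+z)$, but what you actually need for $(x,z)\in\Psi(R)$ is $x\,R\,(x+z)$ and $z\,R\,(x+z)$; the observation $x+y+z = x+(x+y+z) = z+(x+y+z)$ does not close this gap. The missing step is an invocation of condition~(2) from the definition of $\op{Eon}$: since $x \leq x+z \leq x+y+z$ and $x\,R\,(x+y+z)$, condition~(2) yields $x\,R\,(x+z)$, and symmetrically $z\,R\,(x+z)$. With this patch the proof is complete and correct.
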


The isomorphism is given by the map $\theta \mapsto \theta \,\cap \leq$;
see the proof of Lemma~7 in Part~I.

The general setup is described as follows.  
We are given a semilattice $\S = \la S,+,0 \ra$, and we want to construct
a quasi-equational theory $\msc B$ such that 
$\op{QTh}(\msc B) \cong \op{Eon}\,\S$. 
Some simplifications are possible in the finite case, 
and the second representation requires that the semilattice have a 
greatest element, which will be denoted by $1$.

1.  Label $S = \{ 0,a,b,c,\dots \}$.  

2.  Construct $\op{Eon}\,\S$.
For $a<b$ in $\S$, let $\la a,b \ra$ denote the principal eon-relation
generated by $(a,b)$.  Compact eon-relations are joins of finitely many of 
these.  Coatoms of $\op{Eon}\,\S$ correspond to congruences with two
blocks, an ideal and its complement.

Eon relations of the form $\Join_{b \in I} \la 0,b \ra$ for an ideal $I$
of $\S$ will be termed \emph{equational}.  The equational relations include
the least and greatest eon-relations, $\Delta$ and $\nabla$ respectively,
and are closed under joins.

An example of $\op{Eon}\,\S$ is given in Figure~\ref{figx3}, where for 
space purposes $\la a,b \ra$ is abbreviated as $ab$.  The solid points 
indicate the equational eon-relations.

3.  The plan is now to assign predicate symbols $A$, $B$, $C, \dots$ to the
elements of $\S$, and to define a quasi-equational theory $\msc B$
in this language that will represent $\L$ as the lattice of theories
containing $\msc B$.

For each finite join $c = \Join a_j$ in $\S$,
let $C$ be identified with the conjunction $\& A_j$, so that
$C(x) \ifff \& A_j(x)$ will be part of the theory of $\msc B$.
(When $\S$ is finite, we only need predicate symbols for the join 
irreducible elements of $\S$.)
It is sometimes convenient to have special predicate symbols $T$ and $E$
reserved corresponding to $0$ and $1$, respectively.

\subsection*{First representation.}
The simplest representation, from Gorbunov and Tumanov~\cite{GT0}
(see Theorem~5.2.8 in Gorbunov~\cite{VG2}), 
has unary predicates $A(x)$, $B(x)$ etc.~for the elements of $\S$.
The quasi-equational theory $\msc B$ satisfies the laws
\begin{align*}
x &\approx y \\
A(x) &\!\implies\! B(x) \text{ whenever }a \geq b \\
\&_i \, A_i(x) &\!\implies\! B(x) \text{ whenever }\Join_i a_i \geq b .
\end{align*}
The isomorphism from $\op{Eon}\,\S$ to $\op{QTh}(\msc B)$ has $\la 0,b \ra$ 
mapping to the theory given by the law $B(x)$, and $\la a,b \ra$ mapping 
to the theory given by the law $A(x) \!\implies\! B(x)$.
More generally, $\la \Join a_i, \Join b_j \ra$ corresponds to the conjunction
over the indices $j$ of laws $\&_i A_i(x) \!\implies\! B_j(x)$, and the join 
of a set of principal eon-relations corresponds to their conjunction.

\begin{figure}[htbp]
\begin{center}
\includegraphics[height=2.8in,width=6.0in]{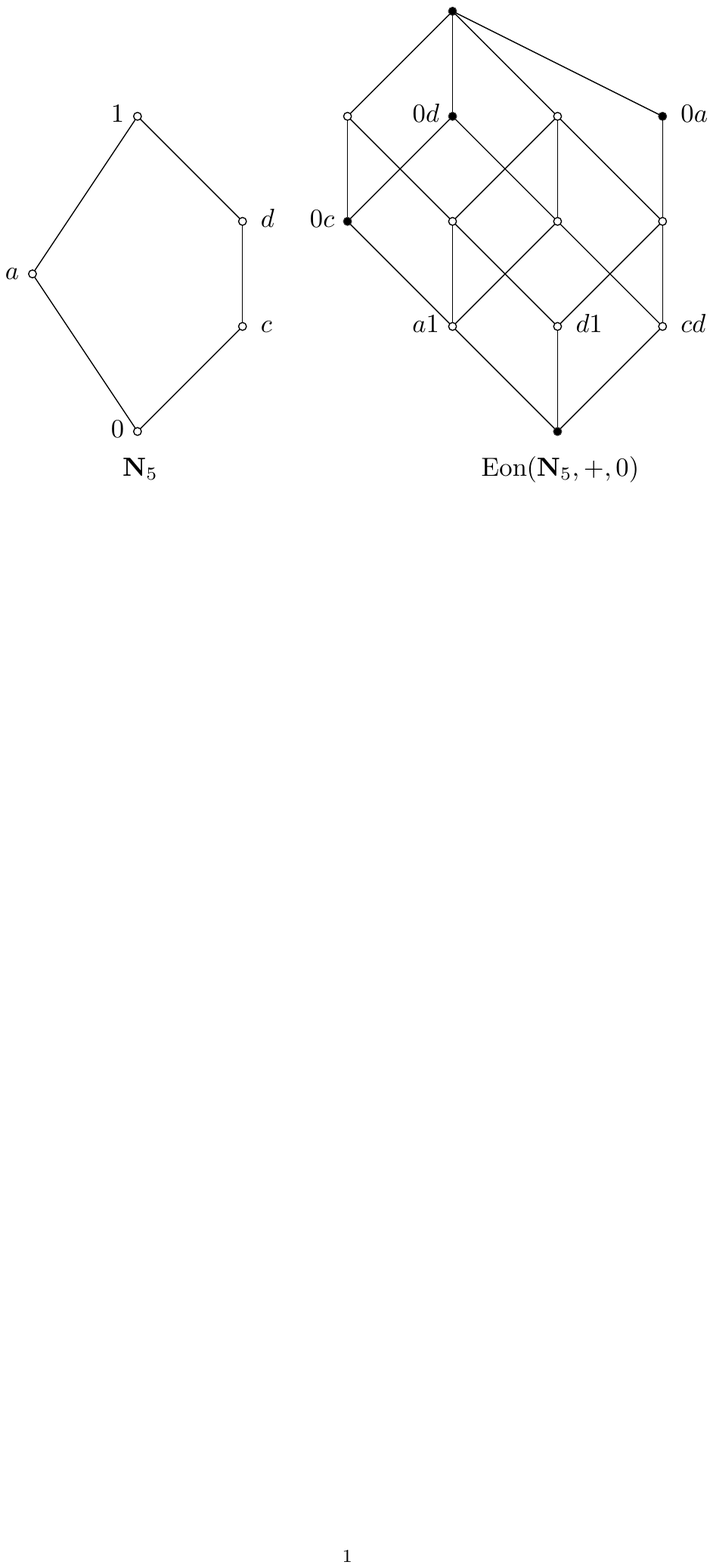}
\caption{Example of $\op{Eon}(\S,+,0)$}{\label{figx3}}
\end{center}
\end{figure}

\subsection*{Second representation.}
If we assume that $\S$ has a largest element $1$, then there is
another kind of representation of $\op{Con}(\S,+,0)$.
This incorporates some ideas from a different representation due to
Gorbunov; see Theorem 5.2.10 of Gorbunov \cite{VG2}, though it goes back 
to \cite{VG0}.

We again use unary predicates $A(x)$ corresponding to elements of $\S$
(or just the join irreducibles in the finite case),
and a predicate $E(x)$ corresponding to $1$.  Let $e$ be a constant
symbol.  The representation is given by the quasi-equational theory $\msc B$ 
with the laws
\begin{align*}
A(e) &\text{ holds for all }A \\
E(x) &\!\implies\! x \approx e \\
A(x) &\!\implies\! B(x) \text{ whenever }a \geq b \\
\&_i \, A_i(x) &\!\implies\! B(x) \text{ whenever }\Join_i a_i \geq b .
\end{align*}
Using the first two laws, we obtain the following.

\begin{lm}
Every quasi-identity of\/ $\msc B$ is equivalent to laws with at most one
variable.
\end{lm}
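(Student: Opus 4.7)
My plan is to reduce an arbitrary quasi-identity $q$ of $\msc B$ to an equivalent conjunction of single-variable laws by exploiting the first two axioms. First I preprocess. The axiom $A(e)$ (for all $A$) yields $E(e)$ as a special case, and combined with $E(x) \imp x \approx e$ this gives $E(x) \ifff x \approx e$ in every model of $\msc B$, so I may freely interchange $E(x)$ and $x \approx e$. If an equation $x_i \approx x_j$ between two distinct variables appears in the hypothesis of $q$, I identify the variables throughout, reducing the variable count. After these reductions the hypothesis is a conjunction of unary atoms of the form $A(x_j)$ (treating any $x_j \approx e$ as $E(x_j)$).

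The key observation is that in any model $\mbf M \models \msc B$ the interpretation $e^{\mbf M}$ of the constant $e$ satisfies every unary predicate; hence assigning $x_j \mapsto e^{\mbf M}$ automatically makes every hypothesis atom about $x_j$ true. Thus hypothesis atoms about variables that do not appear in the conclusion $\beta$ are inert: they can always be independently satisfied by sending such variables to $e$, and so contribute nothing toward deriving $\beta$.

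I then case-split on $\beta$. If $\beta$ is $B(x_k)$ or $x_k \approx e$, I show $q$ is $\msc B$-equivalent to the single-variable law $\mathrm{Hyp}_k(x_k) \imp \beta$, where $\mathrm{Hyp}_k$ denotes the conjunction of hypothesis atoms mentioning $x_k$. If $\beta$ is $x_k \approx x_l$ with $k \neq l$, then $q$ is $\msc B$-equivalent to the conjunction of the two single-variable laws $\mathrm{Hyp}_k(x_k) \imp E(x_k)$ and $\mathrm{Hyp}_l(x_l) \imp E(x_l)$, since an equation between two distinct variables can be forced in $\msc B$ only by pinning both sides to $e$. In each case one direction is immediate from the laws of $\msc B$; the reverse is the contrapositive, where from a counterexample $m$ to a single-variable law in some $\mbf M \models \msc B$ I build a counterexample to $q$ by assigning the relevant variable to $m$ and every other variable to $e^{\mbf M}$, thereby satisfying the full hypothesis (by the key observation) while falsifying $\beta$.

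The main obstacle I expect is the equational-conclusion case: I must argue rigorously that any derivation of $x_k \approx x_l$ from the hypothesis in $\msc B$ really does pass through the axiom $E(x) \imp x \approx e$ applied to both variables. The counterexample construction above is precisely what rules out every other route, since the only non-identity element forced to equal $e$ in a model of $\msc B$ is one satisfying $E$, and all other constraints on the remaining variables are harmlessly satisfied at $e^{\mbf M}$.
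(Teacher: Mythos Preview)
Your proposal is correct and follows essentially the same route as the paper: preprocess away equational hypotheses, then use the fact that $e$ satisfies every predicate to replace all variables not occurring in the conclusion by $e$, and for a conclusion $x_k \approx x_l$ split into the two single-variable laws forcing each side to equal $e$. The only cosmetic difference is that you phrase the equivalence semantically via counterexample construction, whereas the paper does the same reduction by direct syntactic substitution of $e$ for the extraneous variables; the content is identical.
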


\begin{proof}
The atomic formulae in the language of $\msc B$ have the forms
\[ A(x) \qquad  A(e) \qquad x \approx e \qquad x \approx y \ . \]
A quasi-identity is $\&_i \lambda_i \imp \rho$ with each $\lambda_i$
and $\rho$ an atomic formula.  We may assume that no $\lambda_i$ 
is of the form $x \approx y$ or $x \approx e$.

If $\rho$ involves only a single variable $x$,
or $x$ and $e$, then we can replace all the remaining variables
by $e$, and then omit those terms to obtain a law in one variable
that is equivalent to $\lambda \imp \rho$ modulo the laws of $\msc B$.
For example, $(\&_i\, A_i(x)) \ \&\ (\&_j\, B_j(y)) \imp C(x)$
is equivalent to $\&_i\, A_i(x) \imp C(x)$ because $B_j(e)$ holds 
in $\msc B$ for all $j$.

If $\rho$ is $x \approx y$, we can first replace every variable 
except $x$ by $e$, and then remove all $\lambda_i$ of the form $B(e)$,
to obtain a law $\lambda^* \imp x \approx e$.
Then replace every variable except $y$ by $e$, and remove all $\lambda_j$ 
of the form $A(e)$, to obtain a law $\lambda^{**} \imp y \approx e$.
The two laws together are equivalent to the original, because
$\lambda^* \ \&\  \lambda^{**} \imp x \approx e \approx y$.

For example, the law
$(\&_i\, A_i(x)) \ \&\ (\&_j\, B_j(y)) \ \&\ (\&_k C_k(z)) \imp x \approx y$
is equivalent to the two laws
$\&_i\, A_i(x) \imp x \approx e$ and
$\&_j\, B_j(x) \imp y \approx e$.
\end{proof}

In $\msc B$, all the following hold.
\begin{enumerate}
\item[(i)]  Every quasi-identity is equivalent to one-variable laws.
\item[(ii)] Every predicate except $\approx$ is unary.
\item[(iii)] $x \approx e \ifff E(x)$.
\item[(iv)]  $A(e)$ holds for all predicate symbols $A$.
\end{enumerate}
So, in considering extensions of the quasi-equational theory
of $\msc B$, we may restrict our attention to implications of
the form $P \imp Q$ where $P$ and $Q$ are conjunctions of atomic 
predicates $A(x)$, or empty.  

Again, the isomorphism of $\op{Eon}\,\S$ with the lattice of quasi-equational
theories containing $\msc B$ has $\la 0,b \ra$ mapping
to the law $B(x)$, and $\la a,b \ra$ mapping to $A(x) \!\implies\! B(x)$.
More generally, $\la \Join a_i, \Join b_j \ra$ corresponds to the conjunction
over the indices $j$ of laws $\&_i A_i(x) \!\implies\! B_j(x)$.

Now one can see that the order and join dependency relation on
the semilattice of compact elements of $\op{Eon}\,\S$ are determined
as follows.
\begin{enumerate}
\item $\la a,b \ra \leq \la c,d \ra$ iff $a \geq c$ and $a \join d \geq b$,
\item $\la a,b \ra \leq \Join_j \la c_j,d_j \ra$ iff there exists a sequence
$e_1 < f_1=e_2 < f_2=e_3 < \dots <f_k$ such that
\begin{align*}
a &\geq e_1 \\
\forall i \exists j \ \la e_i,f_i \ra &\leq \la c_j,d_j \ra \\
a \join f_k &\geq b .
\end{align*}
\end{enumerate}
That is, for $a \leq b$ in $\S$, we have
$(a,b) \in \la c,d \ra$ iff $a=b$ or $a < b$, $a \geq c$, $a \join d \geq b$.
The proof is routine checking.  
Similarly, check that the description (2) of the join is correct.

Likewise, in the theory of $\msc B$, the implication $A(x) \imp B(x)$ 
is a consequence of $C(x) \imp D(x)$
if and only if $A(x) \imp C(x)$ and $A(x) \ \&\  D(x) \imp B(x)$.
A condition analogous to (2) describes when $A(x) \imp B(x)$ 
is a consequence of the conjuction of implications $C_j(x) \imp D_j(x)$.
Thus the rules for deduction in $\op{QTh}(\msc B)$ mimic
the rules for eon-relation generation in $\op{Eon}\,\S$.
Consequently, $\op{QTh}(\msc B) \cong \op{Eon}\,\S$.

\section{The dual leaf as the congruence lattice of a semilattice
with operators} \label{leaf}

The \emph{dual leaf} is the lattice in Figure~\ref{figx1}; it is the dual of
$\mbf 1 \dot+ \op{Co}(\mbf 4)$, where $\op{Co}(\mbf 4)$ is the lattice
of convex subsets of a 4-element chain.  The dual leaf is meet semidistributive
but not upper bounded, and supports an equa-interior operator, \emph{viz.},
$\eta(1)=1$ and $\eta(x)=0$ for all $x<1$.
It is an open question whether the dual leaf is a lattice of quasi-equational
theories $\op{QTh}(\msc K)$ for some $\msc K$.  However, the dual leaf
has a natural representation as $\op{Con}(\S,\join,0,f,g)$ where $\S$
is given in Figure~\ref{figx2}, $f(x_k)=x_{k+1}$ and $g(x_k)=x_{k-1}$ for
$x \in \{ a,b,c,d \}$ and $k \in \mathbb Z$.  (Compare the representation
of $\op{Co}(\mbf 4)$ as a lattice of $\vare$-closed algebraic subsets in
Example 5.5.10 of Gorbunov \cite{VG2}.)

In Section~\ref{slgo}, we will modify this example to represent the
dual near-leaf of Figure~\ref{figx7} as a lattice of quasi-equational
theories.

\begin{figure}[htbp]
\begin{center}
\includegraphics[height=2.5in,width=6.0in]{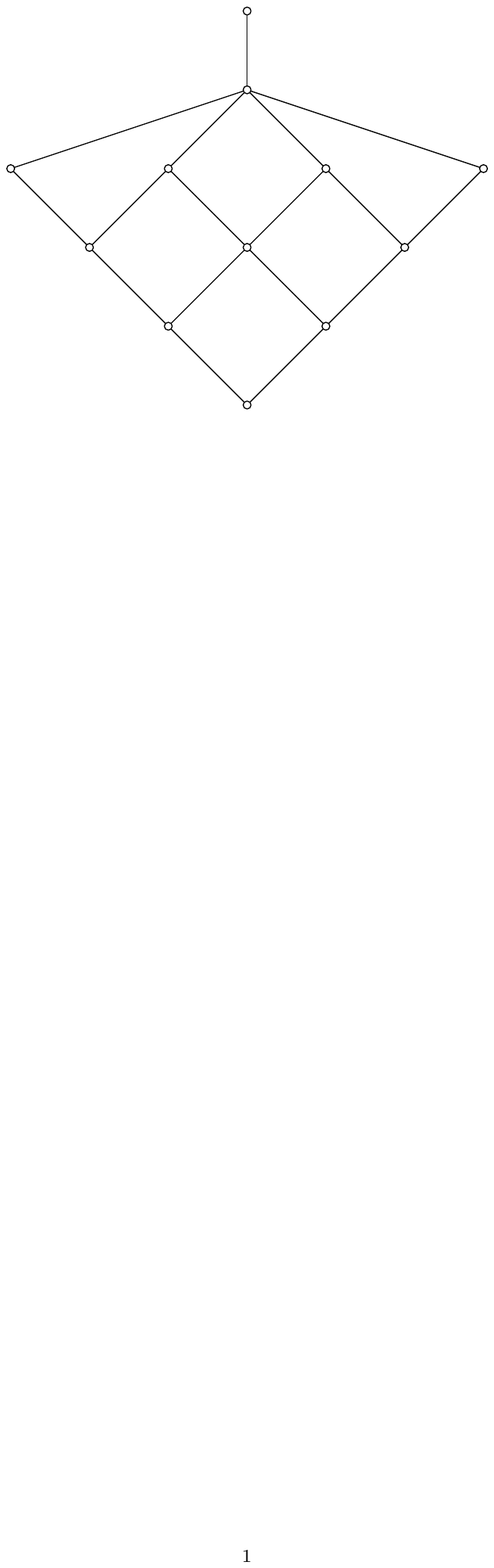}
\caption{Dual leaf}{\label{figx1}}
\end{center}
\end{figure}

\begin{figure}[htbp]
\begin{center}
\includegraphics[height=3.5in,width=6.0in]{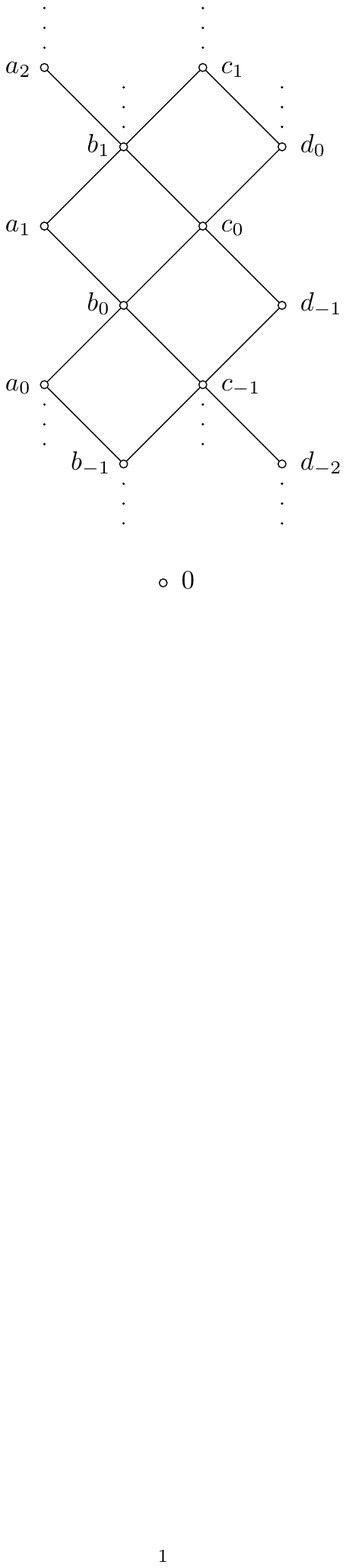}
\caption{$\S$ to represent the dual leaf}{\label{figx2}}
\end{center}
\end{figure}

\section{Sufficient conditions for reduction to one variable}

An important objective is to find conditions that allow us to represent
certain congruence lattices of semilattices with operators as lattices
of quasi-equational theories.  
The motivating case is when the semilattice has a largest element
and the operators form a group, but the results are slightly more 
general than that.  
We begin with properties that permit us to consider only 
quasi-identities in one variable.

Let us say that a monoid $\M$ is \emph{reductive} if 
\begin{enumerate}
\item[(R)] for every pair $f$, $g \in \M$ there is an element $h \in \M$
such that either $f=hg$ or $g=hf$.
\end{enumerate}
This is a rather strong property, but in particular, every group is reductive.

Let $\star$ denote the operation in $\M^{\op{opp}}$, so that $f \star g = gf$.

\begin{thm} \label{unary-rev}
Let $\msc B$ be a quasivariety in a language $\msc L$ with the following
restrictions and laws.
\begin{enumerate}
\item $\msc L$ has only unary predicate symbols (except for $\approx$).
\item $\msc L$ has only unary function symbols, corresponding to the
elements of a fixed reductive monoid $\M$.
\item $\msc L$ has one constant symbol $w$.
\item $\msc B$ satisfies the law $A(x) \imp A(w)$ for every predicate symbol 
of $\msc L$.
\item $\msc B$ satisfies the law $f(w) \approx w$ for every function symbol 
of $\msc L$.
\item $\msc B$ satisfies identities saying that the functions act as the
monoid $\M^{\op{opp}}$, that is, $i(x) \approx x$ and $f(g(x)) \approx h(x)$
whenever $h = f \star g = gf$.
\item $\msc B$ satisfies the laws
\[ f(x) \approx f(y) \!\implies\! x \approx y \]
for every function symbol of $\msc L$.
\end{enumerate}
Then every quasi-identity holding in a theory extending the theory of $\msc B$ 
is equivalent (modulo the laws of $\msc B$) to a set of quasi-identities in 
only one variable.  Hence the lattice of theories 
$\op{QTh}(\msc B)$ is isomorphic to
$\op{Con}\,\S$, where $\S = \la \T,\join,0,\widehat{\E} \ra$ with
$\T$ the semilattice of compact $\msc B$-congruences of
the $\msc B$-free structure\/ $\mbf F_{\msc B}(1)$ 
and\/ $\widehat{\E}$ the monoid of endomorphisms of\/ $\T$ induced by 
the endomorphism monoid $\op{End}(\mbf F)$.
\end{thm}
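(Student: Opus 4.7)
The plan is to prove the reduction to one-variable quasi-identities first, and then invoke the representation theory of Part~I to conclude $\op{QTh}(\msc B) \cong \op{Con}\,\S$. For normalization, laws~(5) and~(6) ensure that every term reduces to either the constant $w$ or the form $m(x)$ for some $m \in \M$ and variable $x$, so the atomic formulas in any quasi-identity $q = \&_i \lambda_i \imp \rho$ have only the shapes $A(m(x))$, $A(w)$, $m(x) \approx n(y)$, $m(x) \approx w$, and $w \approx w$.

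The central step is to eliminate mixed-variable equation premises. If some $\lambda_i$ has the form $m(x) \approx n(y)$ with $x \neq y$, then reductivity supplies $h \in \M$ with $m = hn$ or $n = hm$. In the former case, law~(6) rewrites $m(x)$ as $n(h(x))$, so $\lambda_i$ becomes $n(h(x)) \approx n(y)$; the cancellation law~(7) reduces this to $h(x) \approx y$, and the substitution $y \mapsto h(x)$ eliminates $y$ from $q$ entirely (the other case eliminates $x$ analogously). Iterating yields an equivalent quasi-identity in which no premise binds two distinct variables. Now split by the shape of $\rho$. If $\rho$ involves a single variable $x$, substitute $w$ for every other variable: each predicate premise $A(m(y))$ collapses to $A(w)$, which is implied by the original via law~(4), while each surviving one-variable equation premise in a non-$x$ variable collapses to $w \approx w$ via law~(5); the resulting one-variable quasi-identity is equivalent to $q$ modulo $\msc B$. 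If $\rho$ is $m(x) \approx n(y)$ with $x \neq y$, split $q$ into the two substitutions $y \mapsto w$ and $x \mapsto w$, yielding one-variable laws $\dots \imp m(x) \approx w$ and $\dots \imp w \approx n(y)$; transitivity of $\approx$ together with law~(5) reassembles $q$ from this pair.

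With reduction to one-variable laws in hand, the isomorphism $\op{QTh}(\msc B) \cong \op{Con}\,\S$ is obtained from the machinery of Part~I. One-variable quasi-identities in $\msc L$ extending $\msc B$ correspond bijectively to $\msc B$-congruences of $\mbf F_{\msc B}(1)$; the compact such congruences form the join-semilattice $\T$; and since $\mbf F_{\msc B}(1)$ has a single generator, every endomorphism is determined by the image of that generator (either $w$ or $m(x)$ for some $m \in \M$), so the endomorphism-induced monoid of operators on $\T$ is precisely $\widehat{\E}$. Part~I then identifies $\op{QTh}(\msc B)$ with $\op{Con}(\T,\join,0,\widehat{\E})$. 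I expect the main obstacle to be the mixed-equation elimination: the reductivity dichotomy forces a case split at each step, and one must track the substitutions carefully to ensure that every elimination produces a genuinely equivalent (not merely entailed) quasi-identity and that the procedure terminates.
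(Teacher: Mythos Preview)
Your proposal is correct and follows essentially the same route as the paper: normalize terms via laws~(5)--(6), use reductivity plus cancellation~(7) to convert any mixed-variable equational premise into a substitution that eliminates a variable, then substitute $w$ for all variables not appearing in $\rho$ (using laws~(4)--(5) to see this is equivalent, not merely a consequence), and in the two-variable conclusion case split into two one-variable laws reassembled by transitivity; the final isomorphism is exactly the invocation of Theorem~6 of Part~I with $n=1$. The only places to tighten are making explicit that in the two-variable $\rho$ case you substitute $w$ for \emph{all} variables other than the one retained (not just the other variable of $\rho$), and noting that a premise of the form $m(x)\approx w$ reduces via~(5),(7) to $x\approx w$ and so also triggers a variable elimination---both are handled in the paper's case list and are implicit in your argument.
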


\begin{proof}
First note that, by properties (5) and (7), for each function symbol $f$, 
the quasivariety $\msc B$ satisfies the law $f(x) \approx w \ifff x \approx w$.
Secondly, each atomic formula of the form $f(x) \approx g(y)$ is equivalent 
modulo the laws of $\msc B$ 
to one of the forms $x \approx h(y)$ or $h(x) \approx y$.  
For by property (R), there is an element $h \in \M$ such that either 
$f=hg$ or $hf=g$.  If $f=hg$, then using laws (6) and (7),
\[  f(x) \approx g(y) \quad\text{iff}\quad
    (hg)(x) \approx g(y) \quad\text{iff}\quad
   g(h(x)) \approx g(y) \quad\text{iff}\quad h(x) \approx y  \]
and a similar calculation applies if $hf=g$.

Thus every atomic formula is equivalent in the theory of $\msc B$ to a 
formula of one of the following forms:
\[  A(x) \qquad A(f(x)) \qquad A(w) \qquad x \approx y \qquad x \approx f(x)
\qquad x \approx f(y) \qquad x \approx w. \]

We want to show that any law $\lambda \!\implies\! \rho$ involving more
than one variable is equivalent to a set of laws involving fewer variables.
We may assume that $\rho$ is an atomic formula of one of the above forms,
involving either a variable $x$, or variables $x$ and $y$, or the constant
$w$, or $x$ and $w$.

The premise $\lambda$ is a conjunction of atomic formulae.  
By property (5), we can replace any appearance of $f(w)$ in $\lambda$ by $w$.
By the remarks in the first paragraph of the proof, we can replace any
appearance of a formula of the form $f(x) \approx w$ or $f(x)=g(y)$ by
an equivalent formula.  Thus,
using $z$ to denote an arbitrary variable not appearing in $\rho$ 
(there may be more than one of these), then $\lambda$ is a conjunction 
involving some (including none or all) of the following forms:
\begin{align*}
&A(x) & &A(y) & &A(z) & &A(w)\\
&A(f(x)) & &A(f(y)) & &A(f(z)) \\
&x \approx y & &y \approx z & &z \approx z' \\
&x \approx z & &y \approx f(x) & &z \approx f(x) \\
&x \approx f(x) & &y \approx f(y) & &z \approx f(y) \\
&x \approx f(y) & &y \approx f(z) & &z \approx f(z) \\
&x \approx f(z) & &y \approx w & &z \approx f(z')  \\
&x \approx w & &  & &z \approx w .
\end{align*}
If perchance the hypothesis $\lambda$ includes any one of the following forms:
\begin{align*}
&x \approx y & &y \approx z & &z \approx z' \\
&x \approx z & &y \approx f(x) & &z \approx f(x) \\
&x \approx f(y) & &y \approx f(z) & &z \approx f(y) \\
&x \approx f(z) & &y \approx w & &z \approx f(z')  \\
&x \approx w & & & &z \approx w .
\end{align*}
then we can replace all occurrences of the variable on the left by the 
corresponding expression on the right,
and obtain an equivalent law with fewer variables.  So we may assume that
none of these forms appears in $\lambda$, and thus $\lambda$ involves only
these forms:
\begin{align*}
&A(x) & &A(y) & &A(z) & & A(w)\\
&A(f(x)) & &A(f(y)) & &A(f(z)) \\
&x \approx f(x) & &y \approx f(y) & &z \approx f(z) 
\end{align*}
If, after the previous substitutions, the conclusion $\rho$ involves only $x$,
or $w$, or $x$ and $w$, i.e., it has one of the forms:
\[  A(x) \qquad A(f(x)) \qquad A(w) \qquad x \approx f(x) \qquad x \approx w \]
then we may replace all the variables except $x$ by $w$. 
To see that this is equivalent,
note that $\lambda$ involves only relational forms or $t \approx f(t)$ with
$t$ a variable, while $\msc B$ satisfies the laws 
$A(t) \imp A(w)$ for every predicate symbol and
$f(w) \approx w$ for every function symbol, by properties (4) and (5). 
Thus the implication obtained by replacing the other variables by $w$ is an 
(at most) one-variable law that is equivalent to the original.  
It remains to consider the case that $\rho$ is either $x \approx y$ or 
$x \approx f(y)$, which we now assume.

Now replace all the variables except $x$ and $y$ by $w$. 
As before, the law obtained with this substitution is equivalent to 
the original.  Hence we may assume that $\lambda$ is a
conjunction of these forms:
\begin{align*}
A(x) \qquad &A(y) \qquad A(w) \\
A(f(x)) \qquad &A(f(y)) \\
x \approx f(x) \qquad &y \approx f(y) .
\end{align*}
In particular, each of these expressions involves only one of $x$, $y$ or $w$.
We can write $\lambda$ as $\lambda_x \,\&\, \lambda_y \,\&\, \lambda_w$,
where each of these is a (possibly empty) conjunction of formulae involving
only that variable.

Now replace $y$ by $w$, and then replace any occurrence of a term of the form 
$f(w)$ by $w$ in view of property (5).
This yields a law of the form $H(x,w) \!\implies\!  x \approx w$, 
which is a consequence of $\lambda \imp \rho$ and the laws of $\msc B$.
The hypothesis $H(x,w)$ is the conjunction of the terms on the LHS involving
$x$ and $w$, and terms of the form $A(w)$ replacing any previous occurrence 
of $A(y)$ or $A(f(y))$; any terms $w \approx w$ obtained by the substitution
may be omitted.  
Note that $A(y) \imp A(w)$ and $A(f(y)) \imp A(w)$ by property (4).
Thus we can write $H(x,w)$ as 
$\lambda_x \,\&\, \lambda_y^* \,\&\, \lambda_w$, where $\lambda_y^*$ involves
only $w$ and $\lambda_y \imp \lambda_y^*$.

Similarly, replacing $x$ by $w$ yields a law
$G(y,w) \!\implies\! y \approx w$ where $G(y,w)$ can be written as
$\lambda_x^* \,\&\, \lambda_y \,\&\, \lambda_w$, with $\lambda_x^*$ involving
only $w$ and $\lambda_x \imp \lambda_x^*$.
Thus we have derived two laws, in one variable each.  

Conversely, these two laws together imply the original, because
\begin{align*}
\lambda_x \,\&\, \lambda_y \,\&\, \lambda_w &\imp
\lambda_x \,\&\, \lambda_y \,\&\, \lambda_w \,\&\, 
\lambda_x^* \,\&\, \lambda_y^*\\ 
&\imp x \approx w \ \&\ y \approx w \\
&\imp \rho
\end{align*}
since $\rho$ is either $x \approx y$ or $x \approx f(y)$, and $f(w) \approx w$
in $\msc B$.

Note that any theory extending the theory of $\msc B$ includes (1)--(7),
and thus is determined by its laws in one variable.
The last statement of the theorem is then a consequence of Theorem~6 
of Part~I,  
which states that the lattice of quasi-equational theories that (1) contain
$\msc B$, and (2) are determined relative to $\msc B$ by quasi-identities in
at most $n$ variables,
is isomorphic to $\op{Con}\,\S_n$, 
where $\S_n = \la \T_n,\join,0,\widehat{\E} \ra$ with 
$\T_n$ the semilattice of compact congruences of 
$\op{Con_{\msc B}}\,\F$, 
$\E = \op{End}\,\F$, and\/ $\mbf F = \mbf F_{\msc B}(n)$.
\end{proof}

The elements of $\mbf F_{\msc B}(1)$ are $w$ and $f(x)$ for $f \in \M$.
Endomorphisms of $\mbf F_{\msc B}(1)$ are determined by the image of $x$.
The endomorphisms are the constant map $\vare_w : t \mapsto w$ 
for all $t$, and the maps $\vare_f$ for $f \in \M$ with $\vare_f(w)=w$ and 
$\vare_f(g(x)) = g(f(x)) = (fg)x$.  Note that the mapping
$f \mapsto \vare_f$ embeds $\M$ into $\op{End}\,\mbf F_{\msc B}(1)$, as
$\vare_f \vare_g = \vare_{fg}$.
This follows from the calculation 
$\vare_f \vare_g (x) = \vare_f(g(x)) =  g(f(x)) = (fg)(x) = \vare_{fg}(x)$.

\section{The pseudo-one} \label{pseudo}

We turn to the problem of trying to represent the congruence lattice of a
semilattice with operators as a lattice of quasi-equational theories.

In a lattice $\L$ such that $\L \cong \op{QTh}(\msc K)$ for some 
quasi-equational theory $\msc K$, there is an element $u$ corresponding
to the theory generated by $\msc K \cup \{ x \approx y \}$.  This element
is referred to as the \emph{pseudo-one} of $\L$.  The element $u$
is compact, satisfies $\eta(u)=u$ for the natural equa-interior operator
on $\L$, and the interval $[u,1]$ is isomorphic to the congruence lattice 
of a semilattice.  Thus the interval $[u,1]$ is coatomistic.
The existence of the pseudo-one is listed as property (I8) of an 
equa-interior operator in Part~I.

A lattice $\K$ such that $\K \cong \op{Con}(\S,+,0,\msc F)$ for some 
semilattice with operators may have no such element.
The congruence lattice of $\mbf \Omega = (\omega,\join,0,p)$, 
where $p(0)=0$ and $p(x)=x-1$ for $x>0$, is isomorphic to $\omega + 1$.  
Since $\omega + 1$ contains no element satisfying the properties of the 
pseudo-one, it is not isomorphic to $\op{QTh}(\msc K)$ for any $\msc K$. 
In trying to represent the congruence lattice of a semilattice with operators
as a lattice of quasi-equational theories, when possible, 
one must find an element $k \in \S$ such that $\op{con}(0,k)$
is the pseudo-one of $\op{Con}(\S,+,0,\msc F)$.
Note that in the case when $\S$ has a top element $1$, 
the largest congruence $\op{con}(0,1)$ is compact, and thus is a candidate
for the pseudo-one in a potential representation of $\op{Con}\,\S$ as a 
lattice of quasi-equational theories.

Let $\msc K$ be a quasivariety with $\F = \F_{\msc K}(X)$.
Consider the least $\msc K$-congruence $\Upsilon$ on $\F$ with 
$|\F/\Upsilon|=1$, i.e., $\Upsilon = \la \nabla,\Upsilon_1 \ra$ 
where $\nabla = F \times F$ is the universal relation.  
(Congruences on structures are reviewed in the Introduction.)
Now $\Upsilon$ may or may not be compact in $\op{Con}_{\msc K}(\F)$. 
Nonetheless, it has a very nice property. 

\begin{lm}
If\/ $\theta$ is a compact $\msc K$-congruence on $\msc F$ 
and $\vare$ an endomorphism, then 
$\widehat{\vare}(\theta) \join \Upsilon = \theta \join \Upsilon$ 
in $\op{Con}_{\msc K}(\F)$.
\end{lm}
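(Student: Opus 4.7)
The plan is to reduce to the case of a principal compact congruence and then split on the type of the generating atomic formula.

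First I would use the fact that $\theta$ is compact to write $\theta = \Join_k \op{con}_{\msc K}(\alpha_k)$ for finitely many atomic formulae $\alpha_k$. By definition $\widehat{\vare}(\theta) = \Join_k \op{con}_{\msc K}(\vare\alpha_k)$, and since joining with the fixed element $\Upsilon$ distributes over finite joins (in any complete lattice, $(\Join_k \mu_k)\join\Upsilon = \Join_k (\mu_k\join\Upsilon)$), it suffices to prove the single-atom version
\[ \op{con}_{\msc K}(\vare\alpha)\join\Upsilon = \op{con}_{\msc K}(\alpha)\join\Upsilon \]
for every atomic formula $\alpha$ over $\F$.

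Now I would split into two cases according to the shape of $\alpha$. If $\alpha$ is an equation $s\approx t$, then $\vare\alpha$ is $\vare s\approx\vare t$; but $\Upsilon_0=\nabla=F\times F$, so both $(s,t)$ and $(\vare s,\vare t)$ already lie in the equality part of $\Upsilon$. Hence $\op{con}_{\msc K}(\alpha)\leq\Upsilon$ and $\op{con}_{\msc K}(\vare\alpha)\leq\Upsilon$, and both sides collapse to $\Upsilon$. If instead $\alpha$ is a relational atom $R(\mbf s)$ with $\mbf s\in F^{\rho(R)}$, I would invoke condition (b) from the definition of a congruence on a structure (recalled in the Introduction): because $\Upsilon_0=\nabla$, any two tuples of $F^{\rho(R)}$ are componentwise $\Upsilon_0$-equivalent, so as soon as the $R$-component of a congruence extending $\Upsilon$ contains even one tuple, it must contain all of $F^{\rho(R)}$. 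Applying this both to the tuple $\mbf s$ and to the tuple $\vare\mbf s$ shows that each of $\op{con}_{\msc K}(R(\mbf s))\join\Upsilon$ and $\op{con}_{\msc K}(R(\vare\mbf s))\join\Upsilon$ equals the congruence whose equality part is $\nabla$, whose $R$-component is $F^{\rho(R)}$, and whose other components agree with those of $\Upsilon$.

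The main subtlety, and the place I would be most careful, is in the relational case: one must check that enlarging $\Upsilon$ by a single $R$-atom together with closure under condition (b) does in fact yield a $\msc K$-congruence (not just an abstract congruence of the reduct), so that no further $\msc K$-closure can distinguish the two sides. This should follow because $\Upsilon$ is already a $\msc K$-congruence and the only new element added to the relational part is a full Cartesian power, which is a closure operation that commutes with whatever additional $\msc K$-saturation is required; concretely, the quotient $\F/(\op{con}_{\msc K}(R(\mbf s))\join\Upsilon)$ is a one-element structure in which $R$ holds on the unique tuple, and the same is true for $\vare\mbf s$, giving identical quotients and hence identical congruences.
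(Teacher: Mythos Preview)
Your argument is correct and rests on the same two ingredients as the paper's proof: the decomposition of $\theta$ into principal pieces $\op{con}_{\msc K}(\alpha_k)$, and the use of $\Upsilon_0=\nabla$ together with condition~(b) on relational congruences. The organization differs slightly. The paper argues asymmetrically: it shows $\widehat{\vare}(\theta)\leq\theta\join\Upsilon$ directly by observing, for a relational atom $R(\mbf s)$, that $\mbf s\in\theta_1^R\subseteq(\theta\join\Upsilon)_1^R$ and $\vare\mbf s$ is componentwise $\Upsilon_0$-related to $\mbf s$, hence $\vare\mbf s\in(\theta\join\Upsilon)_1^R$ by~(b); the reverse inclusion is symmetric. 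This sidesteps any need to describe the join explicitly or to worry about $\msc K$-closure. Your route instead tries to identify both $\op{con}_{\msc K}(R(\mbf s))\join\Upsilon$ and $\op{con}_{\msc K}(R(\vare\mbf s))\join\Upsilon$ with the same explicit congruence. The claim that the ``other components agree with those of $\Upsilon$'' is not generally true (quasi-identities in $\msc K$ of the form $R(\mbf x)\imp S(\mbf x)$ could force more), and you rightly flag this. Your patch is sound once stated precisely: both joins are the least $\msc K$-congruence above $\Upsilon$ whose $R$-part contains \emph{some} tuple, and since $\Upsilon_0=\nabla$ forces any such congruence to have $R$-part equal to all of $F^{\rho(R)}$, the two generating conditions coincide and the joins are equal. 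The paper's inclusion argument buys you freedom from ever naming that least congruence.
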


\begin{proof}
Let $\theta = \la \theta_0,\theta_1 \ra$ be a compact 
$\msc K$-congruence on $\F$.  If $(s,t) \in \theta_0$, then 
$(\vare s,\vare t) \in \nabla = \Upsilon_0$,
so $\op{con}_{\msc K}(\vare s,\vare t) \leq \theta \join \Upsilon$.
Likewise, if $\mbf s \in \theta_1^R$ for a relation $R$, then
$\vare\mbf s \,\Upsilon\, \mbf s$ componentwise, and hence
$\vare \mbf s \in (\theta \join \Upsilon)_1^R$.
Thus $\op{con}_{\msc K}(R(\vare \mbf s)) \leq \theta \join \Upsilon$.
Express $\theta$ as a join of principal $\msc K$-congruences, 
say $\theta = \Join \op{con}(\alpha_i)$. 
From the preceding arguments, we conclude that
$\widehat{\vare}(\theta) = \Join \op{con}(\vare\alpha_i) 
\leq \theta \join \Upsilon$.  The reverse inclusion, that 
$\theta \leq \widehat{\vare}(\theta) \join \Upsilon$, is similar.
\end{proof}

Now, as usual, let $\S = \la \T,\join,0,\widehat{\E} \ra$ with
$\T$ the semilattice of compact $\msc K$-congruences of 
$\F = \F_{\msc K}(X)$ 
and $\widehat{\E}$ the monoid of endomorphisms of $\T$ induced by 
the endomorphism monoid $\op{End}(\mbf F)$.
Assuming that $|X| \geq 2$, Let $\kappa$ be the compact 
$\msc K$-congruence $\op{con}_{\msc K}(x,y)$ on $\F$.  Then
\[ \Upsilon = \Join_{\vare \in \E} \widehat{\vare}(\kappa) \]
in $\op{Con}_{\msc K}(\F)$.
When $X = \{ x \}$ and the language of $\msc K$ has a constant 
symbol $w$, the same equation holds with  
$\kappa = \op{con}_{\msc K}(x,w)$.  

Back in $\S$, Lemma~\ref{pseudo} translates as follows:
for every compact $\msc K$-congruence $\theta$ of $\msc F$, 
and every $\vare \in \op{End}\,\F$, there exist endomorphisms
$\gamma_1, \dots, \gamma_k$ such that 
\[
\widehat{\vare}(\theta) \join 
\widehat{\gamma}_1(\kappa) \join \dots \join \widehat{\gamma}_n(\kappa)
= \theta \join 
\widehat{\gamma}_1(\kappa) \join \dots \join \widehat{\gamma}_n(\kappa).
\]
Thus we have the following conclusion.

\begin{thm} \label{pseudoprop}
If\/ $\L \cong \op{QTh}(\msc K)$ for some quasi-equational theory 
$\msc K$, then there is a semilattice with operators
$\S = (S,+,0,\M)$ such that\/ $\L \cong \op{Con}\,\S$, with $\S$ 
satisfying this property:  there exists an element $k \in \S$ such that,
for every $s \in \S$ and $f \in \M$, there exist finitely many
$g_1, \dots, g_n \in \M$ such that
\[  f(s)+g_1(k)+ \dots +g_n(k) = s+g_1(k)+ \dots +g_n(k). \]
\end{thm}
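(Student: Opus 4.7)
The plan is to deduce this theorem directly from the preceding Lemma together with the representation result of Part~I. By Part~I, the hypothesis $\L \cong \op{QTh}(\msc K)$ gives $\L \cong \op{Con}(\T,\join,0,\widehat{\E})$, where $\T$ is the semilattice of compact $\msc K$-congruences of the free structure $\F = \F_{\msc K}(X)$ and $\widehat{\E}$ is the monoid of operators on $\T$ induced by $\op{End}\,\F$. The semilattice with operators required by the theorem will be this $(\T,\join,0,\widehat{\E})$, and the distinguished element $k$ will be the compact congruence $\kappa$ singled out in the paragraph before the theorem: take $\kappa = \op{con}_{\msc K}(x,y)$ if $|X|\geq 2$, or $\kappa = \op{con}_{\msc K}(x,w)$ if $X=\{x\}$ and the language of $\msc K$ contains a constant symbol $w$.

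Next I would invoke the identity $\Upsilon = \Join_{\vare \in \E} \widehat{\vare}(\kappa)$ established in the excerpt for both cases, and combine it with the preceding Lemma, which asserts $\widehat{\vare}(\theta) \join \Upsilon = \theta \join \Upsilon$ for every compact $\msc K$-congruence $\theta$ and every $\vare \in \op{End}\,\F$. This yields the two containments
\[
  \theta \leq \widehat{\vare}(\theta) \join \Join_{\gamma \in \E}\widehat{\gamma}(\kappa),
  \qquad
  \widehat{\vare}(\theta) \leq \theta \join \Join_{\gamma \in \E}\widehat{\gamma}(\kappa),
\]
in the full lattice $\op{Con}_{\msc K}(\F)$.

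The key step is then a compactness argument. Both $\theta$ and $\widehat{\vare}(\theta)$ are compact elements of $\op{Con}_{\msc K}(\F)$, while the right-hand sides above are directed joins of their finite subjoins. So each containment is witnessed by a finite subcollection: there exist $\gamma_1,\dots,\gamma_m$ and $\delta_1,\dots,\delta_\ell$ in $\E$ with
\[
  \theta \leq \widehat{\vare}(\theta) \join \Join_{i=1}^m \widehat{\gamma_i}(\kappa),
  \qquad
  \widehat{\vare}(\theta) \leq \theta \join \Join_{j=1}^\ell \widehat{\delta_j}(\kappa).
\]
Letting $g_1,\dots,g_n$ enumerate the operators $\widehat{\gamma_1},\dots,\widehat{\gamma_m},\widehat{\delta_1},\dots,\widehat{\delta_\ell}$ in $\widehat{\E}$, and adding $g_1(\kappa)+\cdots+g_n(\kappa)$ to both sides of each inequality, gives the equation $\widehat{\vare}(\theta)+g_1(\kappa)+\cdots+g_n(\kappa) = \theta+g_1(\kappa)+\cdots+g_n(\kappa)$ in $\T$. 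Writing $s$ for $\theta$, $f$ for $\widehat{\vare}$, and $k$ for $\kappa$, this is exactly the condition claimed.

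The only real obstacle is bookkeeping: handling the two cases $|X|\geq 2$ and $X=\{x\}$ uniformly, and making sure the compactness argument is performed in the correct lattice ($\op{Con}_{\msc K}(\F)$, not the abstract $\op{Con}(\T,\join,0,\widehat{\E})$). Since each induced operator $\widehat{\gamma}$ and the compact congruence $\kappa$ translate transparently between these two settings via the isomorphism of Part~I, this should present no serious difficulty.
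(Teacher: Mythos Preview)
Your proposal is correct and follows essentially the same route as the paper: invoke the Part~I representation $\L \cong \op{Con}(\T,\join,0,\widehat{\E})$, take $k=\kappa$, combine the preceding Lemma with the identity $\Upsilon = \Join_{\vare}\widehat{\vare}(\kappa)$, and use compactness of $\theta$ and $\widehat{\vare}(\theta)$ in $\op{Con}_{\msc K}(\F)$ to cut the join down to finitely many terms. If anything, you are more careful than the paper in spelling out that one extracts finite witnesses for each inequality separately and then merges them.
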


It is not clear how to use this property to build 
representations in a general setting.
In the next section, we will take the easy way out and 
assume that $\S$ has a largest element $1$ that is fixed
by the operators of $\M$.

\section{Lattices of 1-variable quasi-equational theories} \label{repby1}

The following result represents certain congruence lattices of 
semilattices with operators as the lattice of 1-variable quasi-equational 
theories of a quasivariety, when the semilattice has a largest element $1$
that is fixed by every operator.  In this case, $1$ can play the role of 
$\kappa$.

A monoid $\M$ is said to be \emph{right cancellative} if 
$gf=hf$ implies $g=h$ for all $f$, $g$, $h \in \M$. 
Groups, of course, are right cancellative.

\begin{thm} \label{combined}
Let $\S$ be a join semilattice with $0$ and $1$, and let $\M$ be a 
right cancellative monoid of operators acting on $\S$.  
Assume that $f(1)=1$ for every $f \in \M$.
Then there is a quasivariety $\msc C$ such that $\op{Con}(\S,+,0,\M)$
is isomorphic to $\op{Con}(\T,\join,0,\widehat{\E})$, where
$\T$ is the semilattice of compact $\msc C$-congruences of\/ 
$\mbf F_{\msc C}(1)$ and\/ $\widehat{\E}$ is the monoid of endomorphisms 
of\/ $\T$ induced by $\op{End}(\mbf F)$.
\end{thm}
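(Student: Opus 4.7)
My plan is to combine the \emph{Second Representation} of Section~2, which captures $\op{Con}(\S,+,0)$ when $\S$ has a top element, with the operator-reduction machinery of Theorem~\ref{unary-rev}. I introduce a quasivariety $\msc C$ in a language with a unary predicate symbol $A$ for each $a \in \S$ (with $E$ reserved for the top element $1$), a unary function symbol $f$ for each $f \in \M$, and a single constant $e$. The quasivariety $\msc C$ is axiomatized by three groups of laws: (i) the Second Representation laws $A(e)$, $E(x) \imp x \approx e$, $A(x) \imp B(x)$ when $a \ge b$, and $\&_i A_i(x) \imp B(x)$ when $\Join_i a_i \ge b$; (ii) the operator laws $f(e) \approx e$ (valid since $f(1)=1$), $i(x) \approx x$, $f(g(x)) \approx h(x)$ whenever $h=gf$ in $\M$, and $f(x) \approx f(y) \imp x \approx y$; and (iii) the interaction laws $A(f(x)) \imp B(x)$ whenever $f(a) \ge b$, along with the join versions $\&_i A_i(f_i(x)) \imp B(x)$ whenever $\Join_i f_i(a_i) \ge b$. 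The interaction laws are designed so that $A(f(x))$ behaves in $\msc C$ exactly as the predicate for $f(a) \in \S$.

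Next I apply Theorem~\ref{unary-rev} to conclude that $\op{QTh}(\msc C) \cong \op{Con}(\T,\join,0,\widehat{\E})$, where $\T$ is the semilattice of compact $\msc C$-congruences of $\mbf F = \mbf F_{\msc C}(1)$ and $\widehat{\E}$ is the induced endomorphism monoid. To identify $(\T,\widehat{\E})$ with $(\S,\M)$, I show that the map $a \mapsto \op{con}_{\msc C}(A(x))$ is a $(\join,0)$-isomorphism $\S \to \T$, by a verification directly parallel to the Second Representation analysis in Section~2, with the interaction axioms ensuring that no unintended identifications appear. For the operators, for each $f \in \M$,
\[
\widehat{\vare_f}\bigl(\op{con}_{\msc C}(A(x))\bigr) \;=\; \op{con}_{\msc C}(A(f(x))),
\]
and the interaction laws identify the right-hand side with the compact congruence corresponding to $f(a) \in \S$. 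The only extra endomorphism beyond the $\vare_f$ is the constant map $\vare_e$, which induces the zero operator on $\T$ and is absorbed into the monoid structure; consequently the effective action of $\widehat{\E}$ on $\T$ agrees with $\M$ acting on $\S$.

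The main obstacles are twofold. First, one must verify carefully that the joint effect of the Second Representation axioms and the interaction axioms leaves the compact-congruence structure on $\mbf F_{\msc C}(1)$ precisely isomorphic to $\S$, with no hidden collapses or extra generators; this is routine bookkeeping in the style of Section~2 but the operator axioms add nontrivial cross-relations that must be checked. Second, Theorem~\ref{unary-rev} is stated for \emph{reductive} monoids, a condition strictly stronger than right cancellativity: the free monoid on two generators is cancellative but not reductive. Bridging this gap will require either refining Theorem~\ref{unary-rev}, or giving a direct argument, using the constant $e$ and the cancellativity law, that in $\msc C$ every atomic formula of the form $f(x) \approx g(y)$ reduces to a single-variable form even without the reductive hypothesis. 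This is the most delicate point, and is presumably why the Introduction cautions that the monoid class is ``still a rather special type of monoid.''
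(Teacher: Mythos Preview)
Your proposal contains a structural misreading of what the theorem asks for, and that misreading is precisely the source of the ``obstacle'' you identify at the end.

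The statement does \emph{not} ask you to prove $\op{QTh}(\msc C)\cong\op{Con}(\S,+,0,\M)$; it asks only that $\op{Con}(\S,+,0,\M)\cong\op{Con}(\T,\join,0,\widehat{\E})$, where $\T$ is the semilattice of compact $\msc C$-congruences of $\mbf F_{\msc C}(1)$.  So your step~3 --- showing that $(\T,\widehat{\E})$ is isomorphic to $(\S,\M)$ modulo the harmless constant-zero operator $\widehat{\vare_e}$ --- is \emph{all} that is required: once the semilattices-with-operators are isomorphic, their congruence lattices are isomorphic for free.  Your step~2, invoking Theorem~\ref{unary-rev}, is entirely superfluous for this theorem, and it is the only place reductivity enters.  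The paper proves exactly your step~3 directly: it classifies the $\msc C$-congruences on $\mbf F_{\msc C}(1)$ as $\theta^I$ for ideals $I$ of $\S$, shows the compact ones correspond to principal ideals (hence to elements of $\S$), and verifies that $\widehat{\vare_h}(\theta^s)=\theta^{h(s)}$.  Right cancellativity is used only to check that the cancellation law $f(x)\approx f(y)\imp x\approx y$ holds in $\mbf F_{\msc C}(1)$; reductivity is not needed anywhere.  The combination with Theorem~\ref{unary-rev} is postponed to a separate corollary, which does add the reductive hypothesis.

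There is also a genuine gap in your axiom list: you are missing the law
\[
f(x)\approx g(x)\ \imp\ x\approx e \qquad\text{for }f\ne g\text{ in }\M.
\]
Without it, $\mbf F_{\msc C}(1)$ admits $\msc C$-congruences whose equational part identifies some $f(x)$ with $g(x)$ without collapsing everything to $e$, and these do not correspond to any ideal of $\S$.  The paper includes this as law~(8); together with the cancellation law it forces the equational part of any $\msc C$-congruence on $\mbf F_{\msc C}(1)$ to be either $\Delta$ or $\nabla$, which is what makes the classification in your step~3 go through cleanly.
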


Moreover, the laws of the quasivariety $\msc C$ in this theorem 
will include those of the quasivariety $\msc B$ from 
Theorem~\ref{unary-rev}.  

\begin{proof}
Our language will include unary predicate symbols $A$ for each nonzero 
element $a$ of $\S$, operation symbols $f$ for each $f \in \M$, and a 
constant $w$.  The predicate symbol $U$ corresponds to the element $1$.

The construction begins by assigning a set $\msc P(s)$
of atomic formulae to each element of $\S$, including $s=0$.  
For $s \in S$, let 
$\msc P(s) = \{ A(f(x)): a \ne 0 \text{ and }f(a) = s \}$.  
Also, let $\msc Q$ be the set of atomic formulae involving $w$ 
given by $\msc Q = \{ A(w): a \ne 0 \}$.

Define $\msc C$ to be the quasivariety determined by these laws.
\begin{enumerate}
\item[(5)] $f(w) \approx w$ for every $f \in \M $.
\item[(6)] $i(x) \approx x$ and $f(g(x)) \approx h(x)$ whenever
$h = f \star g = gf$.
\item[(7)] $f(x) \approx f(y) \imp x \approx y$ for every $f \in \M$.
\item[(8)] $f(x) \approx g(x) \imp x \approx w$ for each pair  
$f \ne g \in \M$.
\item[(9)] $U(x) \imp x \approx w$.
\item[(10)] $A(f(x))$ whenever $a \ne 0$ and $f(a)=0$ in $\S$.
\item[(11)] $A(w)$ for all nonzero $a \in \S$.
\item[(12)] $\beta \imp \alpha$ whenever $a \leq b$, $\alpha \in
\msc P(a)$, $\beta \in \msc P(b)$.
\item[(13)] $\& \beta_j \imp \alpha$ whenever $a \leq \sum b_j$,
$\alpha \in \msc P(a)$, $\beta_j \in \msc P(b_j)$ for each $j$.
\end{enumerate}
The laws of $\msc C$ contain the laws of the quasivariety $\msc B$ of 
Theorem~\ref{unary-rev}.
The language is specified to satisfy (1)--(3),
while Law (4) has been replaced by (11), which is stronger.
Laws (5)--(7) are included here, and the last six (8)--(13) are new. 
Note that laws (10) and (11) correspond to the predicates in $\msc P(0)$
and $\msc Q$, respectively.  
Also, law (12) is redundant as a special case of (13).
As in the proof of Theorem~\ref{unary-rev},
$\msc C$ satisfies $f(x) \approx w \ifff x \approx w$.


The universe of $\mbf F = \mbf F_{\msc C}(1)$ is 
$\{ f(x): f \in  \M \} \cup \{ w \}$.
The operations correspond to elements of $\M$, but
composing as in $\M^{\op{opp}}$ per law (6). 
There is a unary predicate $A^{\F}$ for each nonzero element $a$ of $\S$.
In the free structure only (10) and (11) apply, with the remaining laws adding
no additional relations.  Thus $A^{\F}(f(x))$ holds for all
$A(f(x)) \in \msc P(0)$, and $A^{\F}(w)$ holds for all $A(w) \in \msc Q$. 

The endomorphisms of $\F$ are again the constant map $\vare_w$ with 
$\vare_w(f(x)) = \vare_w(w)=w$ for all $f \in \M$, and 
the maps $\vare_f$ with $\vare_f(w)=w$ and 
$\vare_f(g(x))=g(f(x))=(fg)x$ for all $f$, $g \in \M$.

Recall that a congruence $\theta$ on a structure is a pair 
$\la \theta_0,\theta_1 \ra$ where $\theta_0$ is an equivalence 
relation and $\theta_1$ is a collection of predicates.
In describing congruences on $\F$, we will use $\Delta$ to denote the diagonal
(equality) equivalence relation, and $\nabla$ to denote the universal binary 
relation, $\nabla = F \times F$.

Note that modulo the laws of $\msc C$, every relational atomic formula is 
equivalent to one of the form $A(f(x))$.  For, using law (6), $A(x)$ is 
equivalent to $A(i(x))$, and $A(f(g(x)))$ is equivalent to $A(h(x))$ 
where $h=gf$.
Each atomic formula of this form is in exactly one set $\msc P(s)$,
\emph{viz.}, for $s=f(a)$, and so the sets $\msc P(s)$ form a partition 
of these representative atomic formulae.

{\bf Claim 1:}  The following are $\msc C$-congruences on $\F$:
\begin{itemize}
\item for any nonempty proper ideal $I$ of $\S$, 
$\theta^I = \la \Delta, \msc Q \cup \bigcup_{s \in I} \msc P(s) \ra$,
\item for the largest ideal $I = S$, 
$\theta^S = \la \nabla, \msc Q \cup \bigcup_{s \in S} \msc P(s) \ra$.
\end{itemize}
Note that $A(f(x)) \in (\theta^I)_1$ if and only if $f(a) \in I$.
For a principal ideal $\id s$, let us write $\theta^s$ instead of 
$\theta^{\id s}$.

Clearly equality and the universal relation respect the operations
of $\F$, join and all $f \in \M$.
Any set of relations is compatible with the diagonal $\Delta$, 
and because $(\theta^S)_1$ contains all possible predicates on $\F$, 
it is compatible with $\nabla$.

It remains to verify that the quotient structures $\F/\theta^I$
satisfy the laws of $\msc C$.  We may assume that $I<S$, since
$\theta^S$ is the largest congruence of $\F$.
Laws (5) and (6) follow from the definition
of the operations in $\F$, and laws (10) and (11) from the relations
$\msc R^{\F}$ holding in $\F$.

For law (7), suppose $f(g(x)) = f(h(x))$.  Then $(gf)x=(hf)x$,
and since $\M$ is right cancellative and $\F$ is free, $g=h$. 
Hence $g(x)=h(x)$.  

For law (8), $f(x) \ne g(x)$ in $\F$ for $f \ne g$, although
$f(w)=w=g(w)$.

For law (9), if $U(f(x))$ is in $(\theta^I)_1$, then 
$f(1) = 1$ is in $I$, and hence $(x,w) \in (\theta^I)_0 = \nabla$.

Since law (12) is a special case of law (13), we consider (13).
Again, we may assume $I<S$.   Let $a \leq \sum b_j$,
$\alpha = D(g(x)) \in \msc P(a)$, and 
$\beta_j = C_j(f_j(x)) \in \msc P(b_j)$ for each $j$.
Suppose $\beta_j \in (\theta^I)_1$ for each $j$, so that
$f_j(c_j)=b_j \in I$.   Then $\sum b_j \in I$, and hence 
$g(d) = a \in I$.  It follows that $\alpha \in (\theta^I)_1$,
as desired.

Thus each $\F/\theta^I$ is in $\msc C$.

{\bf Claim 2:}  
For ideals $I$ and $J$, 
$\bigcup_{s \in I} \msc P(s) \subseteq \bigcup_{t \in J} \msc P(t)$
if and only if $I \subseteq J$.  In particular, if $I \not\subseteq J$
with say $a \in I-J$, then $A(x) \in (\theta^I)_1 - (\theta^J)_1$.
Thus the map $I \mapsto \theta^I$ is one-to-one.

{\bf Claim 3:}  Every $\msc C$-congruence on $\F$ is $\theta_I$ for 
some ideal $I$.
Indeed, given a $\msc C$-congruence $\psi = \la \psi_0,\psi_1 \ra$, 
by laws (7) and  (8) its equivalence $\psi_0$ is either $\Delta$ or 
$\nabla$.  
Moreover, $\psi_1$ contains $\msc Q \cup \msc P(0)$ by laws (10)
and (11).  Let $I = \{ s \in \S : \msc P(s) \subseteq \psi_1 \}$.
Then $I$ is an ideal by (12) and (13).
If $1 \in I$, then $\psi_0 = \nabla$ by law (9), 
for $U(f(x)) \in \psi_1$ implies $(f(x),w) \in \psi_0$,
which in $\msc C$ is equivalent to $(x,w) \in \psi_0$.
If $1 \notin I$, then $\psi_0 = \Delta$, again by (7) and (8).
It follows that $\theta=\theta^I$.
Note that $\theta^I$ is compact exactly when $I$ is principal.

{\bf Claim 4:}  For $s \in \S$, $\widehat{\vare}_w(\theta^s) = 
\theta^0$, the least congruence on $\F$.
By definition, $\widehat{\vare}_w(\theta^s)$ is the $\msc C$-congruence
generated by all pairs $(\vare_w u,\vare_w v)$ with $(u,v) \in (\theta^s)_0$
and all relations $A(\vare_w u)$ with $A(u) \in (\theta^s)_1$.
The former are just the single pair $(w,w)$,
and the latter are all contained in $\msc Q$.
Thus $\widehat{\vare}_w(\theta^s) = \theta^0$, the least congruence
on $\F$. 

{\bf Claim 5:}  For an endomorphism $\vare_h$ with $h \in \M$, 
$\widehat{\vare}_h(\theta^s) = \theta^{h(s)}$.
Again, $\widehat{\vare}_h(\theta^s)$ is the $\msc C$-congruence
generated by all pairs $(\vare_h u,\vare_h v)$ with $(u,v) \in (\theta^s)_0$
and all relations $A(\vare_h u)$ with $A(u) \in (\theta^s)_1$.
If $s=1$, then $U(x)$ is one of the latter, whence 
$\widehat{\vare}_h(\theta^s) = \theta^{1}$, while $h(s)=h(1)=1$
by assumption on $\M$.  

Hence we may assume that $s<1$, and so $(\theta^s)_0 = \Delta$.
Of course, $\vare_h(\Delta) \subseteq \Delta$.  
Consider a relation $A(f(x)) \in (\theta^s)_1$.
Then $A(f(x)) \in \msc P(t)$ for some $t$ with $f(a) = t \leq s$.
Now $\vare_h(f(x)) = f(h(x)) = (hf)(x)$ in $\F$,
while $(hf)(a)=f(f(a)) = h(t) \leq h(s)$ in $\S$.
Thus $A(\vare_h(f(x)))$ is in $\msc P(h(s))$, which is 
contained in $(\theta^{h(s)})_1$.  
Moreover, with $f=i$ and $a=s$, these relations include $S(h(x))$, 
which is a generator for the $\msc C$-congruence $\theta^{h(s)}$.  
Thus $\widehat{\vare}_h(\theta^s) = \theta^{h(s)}$.

\medskip

We have seen that the compact $\msc C$-congruences of $\F$ are in
one-to-one correspondence with the elements of $\S$, via the map
$s \mapsto \theta^s$.  The action of the operators $\widehat{\vare_h}$, 
for $h \in \M$, on the semilattice $\T$ of compact $\msc C$-congruences 
of $\F$, mimics the action of $\M$ on $\S$, while $\widehat{\vare_w}$
is the constant zero map.  
Therefore $\op{Con}(\T,\join,0,\widehat{\msc E}) \cong \op{Con}(\S,+,0,\M)$.

This completes the proof of the theorem.
\end{proof}

For groups of operators, we can get by with only one predicate per orbit,
and we could use inverses instead of $\G^{\op{opp}}$.

\section{Semilattices with groups of operators} \label{slgo}

To combine the previous two theorems, we want to consider a quasivariety
$\msc D$ satisfying all the laws (1)--(13) with $\M$ both reductive and
right cancellative.

\begin{cor} \label{reductive}
Let $\S$ be a join semilattice with $0$ and $1$, and let $\M$
be a reductive, right cancellative monoid of operators acting on $\S$.  
Assume that $f(1)=1$ for every $f \in \M$.
Then there is a quasivariety $\msc D$ such that the lattice of 
quasi-equational theories of\/ $\msc D$ is isomorphic to $\op{Con}(\S,+,0,\M)$.
\end{cor}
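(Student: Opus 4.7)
The plan is to combine Theorem~\ref{unary-rev} and Theorem~\ref{combined} by taking $\msc D$ to be exactly the quasivariety $\msc C$ constructed in the proof of Theorem~\ref{combined}. The reductivity hypothesis on $\M$ (absent from Theorem~\ref{combined} but assumed here) is precisely what allows us to feed the conclusion of Theorem~\ref{combined} into the hypotheses of Theorem~\ref{unary-rev} and so upgrade the isomorphism from a statement about congruences on $\mbf F_{\msc C}(1)$ to a statement about the full lattice $\op{QTh}(\msc D)$.

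The first step is to verify that $\msc D$, with defining laws (1)--(13) from the proof of Theorem~\ref{combined}, satisfies the seven hypotheses of Theorem~\ref{unary-rev}. Its language has only unary predicate symbols besides $\approx$ (hypothesis (1)), only unary function symbols indexed by $\M$ (hypothesis (2)), and a single constant $w$ (hypothesis (3)). Laws (11), (5), (6), (7) in the definition of $\msc D$ match hypotheses (4), (5), (6), (7) of Theorem~\ref{unary-rev}, with (11) in fact stronger than (4). Because $\M$ is reductive, Theorem~\ref{unary-rev} applies and yields
\[
\op{QTh}(\msc D) \;\cong\; \op{Con}(\T,\join,0,\widehat{\E}),
\]
where $\T$ is the semilattice of compact $\msc D$-congruences of $\mbf F = \mbf F_{\msc D}(1)$ and $\widehat{\E}$ is the submonoid of $\op{End}(\T)$ induced by $\op{End}(\mbf F)$.

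Next I would invoke Theorem~\ref{combined}, whose remaining hypotheses---that $\M$ be right cancellative with $f(1)=1$ for every $f \in \M$---are part of the corollary's assumptions. Applied to the same semilattice with operators $(\S,+,0,\M)$ and the same quasivariety $\msc D = \msc C$, that theorem gives
\[
\op{Con}(\S,+,0,\M) \;\cong\; \op{Con}(\T,\join,0,\widehat{\E}),
\]
via the map $s \mapsto \theta^s$, which intertwines the actions of $\M$ and $\widehat{\E}$. Composing the two isomorphisms delivers $\op{QTh}(\msc D) \cong \op{Con}(\S,+,0,\M)$, as desired.

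The only point requiring care, and the main obstacle insofar as there is one, is that the quasivariety $\msc D = \msc C$ really does support both conclusions with the same free structure $\mbf F_{\msc D}(1)$ and the same induced endomorphism monoid. This is automatic: reductivity is a property of the abstract monoid $\M$ that is not encoded as a new law of $\msc D$, so the free structure and its endomorphism monoid computed in the proof of Theorem~\ref{combined} are unchanged when we additionally assume $\M$ is reductive. Hence no further calculation is needed beyond the two theorems already established.
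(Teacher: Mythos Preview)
Your proposal is correct and follows exactly the paper's approach: take $\msc D = \msc C$ from Theorem~\ref{combined}, observe (as the paper does in the line just before the corollary and within the proof of Theorem~\ref{combined}) that its laws include those of the quasivariety $\msc B$ of Theorem~\ref{unary-rev}, and compose the two isomorphisms. The paper in fact gives no proof beyond that one-sentence remark, so your write-up simply spells out what the authors leave implicit.
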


This corollary applies to the following situations, so that in each case
the congruence lattice of the semilattice with operators is representable 
as a lattice of quasi-equational theories.
\begin{itemize}
\item $\S$ is a semilattice with 0 and 1 and a group of operators
fixing both $0$ and $1$.
\item In particular, $\S$ is a semilattice with 0 and 1 and $\M$ is any
subgroup of $\op{Aut}\,\S$. 
\item $\S = -\mathbb N \cup \{ -\infty,\infty \}$, where $-\mathbb N$ denotes 
the non-positive integers and $-\infty$, $\infty$ are new least and greatest
elements, respectively.  This is a join semilattice with the operators 
$p_k(x)=k+x$ for $k$, $x \in -\mathbb N$ and $p_k(\pm \infty)=\pm \infty$.
\item $\mathbb N$ can be replaced by non-positive rationals, or reals,
in the preceding example.
\item More generally, we can take $-\mbf P$ to be the negative cone of 
a totally ordered group, with new least and greatest elements $-\infty$, 
$\infty$ adjoined, and say the left translations, $p_k(x)=kx$ for 
$k$, $x \in -\mbf P$ and $p_k(\pm \infty)=\pm \infty$, as operators.
\item The operations can be restricted to a submonoid, so long as the 
reductive property is maintained.  In particular, we can restrict to
a cyclic or quasicylic monoid.
\item These representations can be combined as follows.  Let $\S$ be a
fixed semilattice with 0, 1, and a group $\G$ of operators fixing
$0$ and $1$.
Let $-\mbf P$ be a negative cone as above, and let $\K$ denote its operators.  
Now replace each point of $-\mbf P$ by $\S$, i.e., take 
$\mbf Q=-\mbf P \times \S$ with the lexicographic order, and adjoin 
new elements $\pm \infty$.  
Then $\K \times \G$ operates on $\mbf Q$ naturally:  $(p_k,f)(x,y) =
(kx,f(y))$ and $(p_k,f)(\pm \infty)=\pm \infty$.  
Moreover, it is reductive and cancellative, and so the corollary applies.  
\item If the chain $-\mbf P$ is discrete, we could identify the 0 and 1 
of consecutive semilattices in the chain in the previous construction.
\end{itemize}
There are a lot of details to be checked there, but they are routine.

For example, with the negative cone $-\mbf P$ of a totally ordered group
and its full complement of operators $p_k$, we obtain a representation
of the lattice $1 \dot+ (\msc I^*(-\mbf P \dot+ 1)) \times \mbf 2$,
where $\msc I^*$ denotes nonempty ideals, as a lattice of quasi-equational
theories.  With $-\mbf P = -\mathbb N$, we see that 
$1 \dot+ (\omega^* \times \mbf 2)$ is $\op{QTh}(\msc K)$ for some $\msc K)$.

The Universal Algebra Calculator of Ralph Freese and Emil Kiss \cite{UAC}
has proved to be useful in calculating the congruence lattice of a 
finite semilattice with a group of operators.  Even when $\S$ is fairly
small, its congruence lattice can be rather large.

Now let us represent the dual near-leaf of Figure~\ref{figx7}.
For $\S$ we take the semilattice in Figure~\ref{figx2} and add a
top element $1$.  There is a natural action of the integers $\mathbb Z$ on 
$\S$ as noted in Section~\ref{leaf}.  With the new $1$ added, we obtain the
dual near-leaf as $\op{Con}(\S,+,0,\mathbb Z)$.  Following the prescription
given above, this is the lattice of theories of the quasivariety satisfying
the laws below.  As before, we denote the action of $\mathbb Z$ by
$f(x_k)=x_{k+1}$ and $g(x_k)=x_{k-1}$ for $x \in \{ a,b,c,d \}$ and
$k \in \omega$, and $f(0)=0=g(0)$ and $f(1)=1=g(1)$.
\begin{align*}
&fg(x) \approx x \text{ and } gf(x) \approx x \\
&A(e) \qquad B(e) \qquad C(e) \qquad D(e) \\
&f(e) \approx e \qquad\quad  g(e) \approx e \\
&D(x) \!\implies\! C(x) \!\implies\! B(x) \!\implies\! A(x)\\
&C(x) \!\implies\! D(g(x)) \\
&B(x) \!\implies\! C(g(x)) \\
&A(x) \!\implies\! B(g(x)) \\
&A(x) \ \&\ C(g(x)) \!\implies\! B(x) \\
&B(x) \ \&\ D(g(x)) \!\implies\! C(x) \\
&x \approx f^k(x) \!\implies\! x \approx e \quad \text{ for all }k > 0 \\
&x \approx g^k(x) \!\implies\! x \approx e \quad \text{ for all }k > 0.
\end{align*}

\begin{figure}[htbp]
\begin{center}
\includegraphics[height=3.0in,width=6.0in]{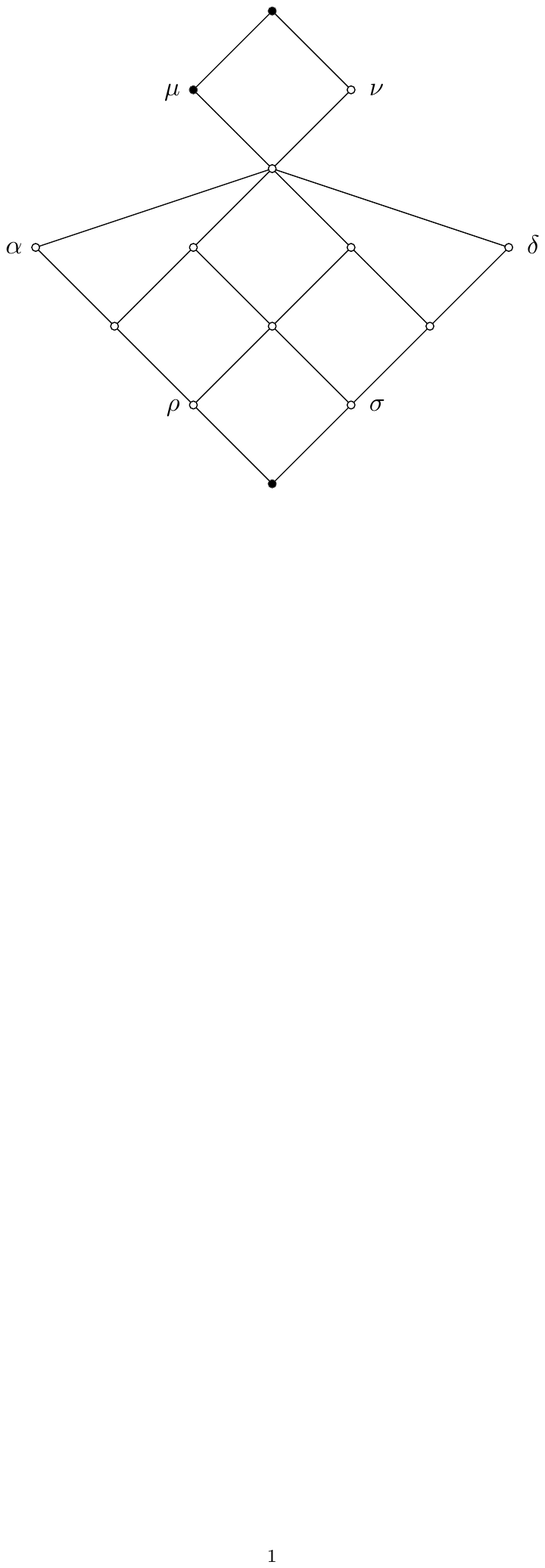}
\caption{Dual near-leaf}{\label{figx7}}
\end{center}
\end{figure}

Again, it is instructive to make a chart of the dual near-leaf with
the corresponding congruence generators and theories (where $\sim$ indicates
that laws are equivalent modulo the defining relations).
Here are some of them:
\begin{align*}
&\alpha  &&\la a_k,b_k \ra \join \la b_k,c_k \ra \join \la c_k,d_k \ra
&&A(x) \!\RA\! B(x) \!\RA\! C(x) \!\RA\! D(x) \\
&\delta  &&\la d_k,c_{k+1} \ra \join \la c_{k+1},b_{k+2} \ra \join \la
b_{k+2},a_{k+3} \ra &&D(x) \!\RA\! C(fx) \!\RA\! B(f^2x)
\!\RA\! A(f^3x) \\
&\rho  &&\la d_k,c_{k+1} \ra &&D(x) \!\implies\! C(f(x)) \\
&\sigma  &&\la a_k,b_{k-1}\ra  &&A(x) \!\implies\! B(g(x)) \\
&\mu  &&\la 0,a_k \ra = \la 0,b_k \ra = \la 0,c_k \ra =
\la 0,d_k \ra &&A(x) \sim B(x) \sim C(x) \sim D(x) \\
&\nu &&\la a_k,1 \ra = \la b_k,1 \ra = \la c_k,1 \ra = \la d_k,1 \ra
&& A(x) \!\RA\! x \approx e \sim \dots \sim D(x) \!\RA\!
x \approx e
\end{align*}
Note that only $0$, $1$ and $\mu$ are equational.

The dual near-leaf is not an upper bounded lattice, and thus answers in
the negative Question~3 from Adams, Adaricheva, Dziobiak and Kravchenko
\cite{AADK}.  Dually, not every finite $Q$-lattice is lower bounded.

\section{Discussion}

The constant $w$ played a crucial role in the reduction to one variable,
Theorem~\ref{unary-rev}.  This was borrowed from Gorbunov, see
\cite{VG2}, and some such device is necessary.  For example, the law
$A(x) \!\implies\! B(y)$ says that if there exists an $x$ with $A(x)$,
then $B(y)$ holds for all $y$.  It cannot be reduced to one variable
without introducing a constant.

On the other hand, the properties that we have
given to $w$ have the consequence of making
the unit congruence of $\mbf F_{\msc C}(1)$ compact, so that we must 
limit consideration to semilattices with both 0 and 1.  
In turn, if $\S$ has both 0 and 1, then the largest congruence
of $\op{Con}(\S,+,0,\msc F)$ is compact.

\begin{lm}
Let\/ $\S = (S,+,0,\msc F)$ be a semilattice with a monoid $\msc F$ 
of operators.  The largest congruence of\/ $\S$ is compact if and only if\/
$\msc F u = \{f(u): f \in \msc F \}$ is cofinal in $\S$ for some $u \in S$.
\end{lm}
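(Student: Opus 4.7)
My plan is to express the largest congruence $\nabla$ of $\S$ as a single principal congruence $\op{con}_{\msc F}(0,u)$, and then to identify the $0$-class of that congruence with the smallest $\msc F$-closed ideal of $\S$ containing $u$. Once this identification is made, both directions drop out.

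For $(\Leftarrow)$, assume $\msc F u$ is cofinal in the (ideal-theoretic) sense that every element of $\S$ lies below a finite join of elements of $\msc F u$. Set $\theta = \op{con}_{\msc F}(0,u)$; this is principal, hence compact. In $\theta$ we have $0\,\theta\,u$, so $0 = f(0)\,\theta\,f(u)$ for every $f \in \msc F$. Given $s \in S$, choose $f_1,\dots,f_k \in \msc F$ with $s \leq f_1(u) + \dots + f_k(u) =: t$. Each $f_j(u)\,\theta\,0$, so $t\,\theta\,0$; then $s = s+0\,\theta\,s+t = t\,\theta\,0$. Hence $\theta = \nabla$ and $\nabla$ is compact.

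For $(\Rightarrow)$, suppose $\nabla$ is compact, so $\nabla = \bigvee_{i=1}^n \op{con}(a_i,b_i)$ is a finite join of principal congruences. Using $\op{con}(a,b) = \op{con}(a,a+b)$, I may replace each $b_i$ by $a_i+b_i$ and assume $a_i \leq b_i$. Set $u = b_1 + \dots + b_n$. Then $\op{con}(a_i,b_i) \leq \op{con}(0,u)$, because in the latter $0 \equiv u \geq b_i \geq a_i$ forces $a_i \equiv b_i \equiv 0$. Hence $\nabla \leq \op{con}(0,u) \leq \nabla$, so $\nabla = \op{con}_{\msc F}(0,u)$.

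The crux is to show that the $0$-class of $\op{con}_{\msc F}(0,u)$ coincides with
\[
I \;=\; \{ s \in S : s \leq f_1(u) + \dots + f_k(u) \text{ for some } f_1,\dots,f_k \in \msc F \},
\]
which is the smallest $\msc F$-closed ideal of $\S$ containing $u$. One containment is immediate, since the $0$-class of any $\msc F$-congruence identifying $0$ and $u$ is an $\msc F$-closed ideal containing $u$, hence $\supseteq I$. For the other, I define $x \sim y$ iff $x + i = y + i$ for some $i \in I$, and check routinely that $\sim$ is an $\msc F$-congruence with $0 \sim u$: transitivity via $i+i' \in I$, compatibility with $+$ trivial, and compatibility with $f \in \msc F$ from $f(x+i) = f(x)+f(i)$ together with $f(i) \in I$. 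Hence $\op{con}_{\msc F}(0,u) \subseteq \sim$, so $[0]_{\op{con}(0,u)} \subseteq [0]_\sim = I$. From $\op{con}(0,u) = \nabla$ we conclude $I = S$, which is the required cofinality of $\msc F u$.

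The main obstacle is the identification of the $0$-class with $I$: the explicit description via $\sim$ bypasses any iterative closure computation and makes $\msc F$-compatibility a direct consequence of $\msc F$-closure of $I$. A minor interpretive point is that ``$\msc F u$ cofinal'' must be read as ``$\msc F u$ generates $\S$ as an ideal,'' since the literal reading that every $s \leq f(u)$ for a single $f \in \msc F$ is strictly stronger than compactness of $\nabla$ provides in general; in the intended applications (notably when $\S$ has a top fixed by $\msc F$, so that $u = 1$ works and $\msc F u = \{1\}$), the two readings coincide.
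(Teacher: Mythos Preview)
The paper states this lemma without proof, so there is no argument to compare against. Your proof is correct and self-contained; the reduction of a compact $\nabla$ to a single $\op{con}_{\msc F}(0,u)$, the identification of the $0$-class with the $\msc F$-closed ideal generated by $u$, and the auxiliary congruence $x \sim y \iff x+i=y+i$ for some $i\in I$ all work as you describe. The construction of $\sim$ is a clean way to bound $\op{con}_{\msc F}(0,u)$ from above without any transfinite closure computation, and it is in the same spirit as the description of $\varphi(F)$ and $\psi(F)$ that the paper gives in the paragraph immediately following the lemma.

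Your ``minor interpretive point'' is in fact a genuine correction to the statement: under the literal reading of cofinal (for every $s$ there is a single $f$ with $s \le f(u)$), the lemma is false. Take $\S$ to be the semilattice of finite subsets of $\mathbb N$ under union, with $\msc F = \op{Sym}(\mathbb N)$ acting in the obvious way. Then $\nabla = \op{con}_{\msc F}(\emptyset,\{0\})$ is principal, hence compact, but for every $u$ the orbit $\msc F u$ consists of all sets of size $|u|$ and is not cofinal in the strict sense. So the equivalence that actually holds, and that your argument proves, is: $\nabla$ is compact iff for some $u$ the set $\msc F u$ generates $\S$ as an ideal. As you note, in the situations the paper cares about (where $\S$ has a top element fixed by every operator) one may take $u=1$, and the two readings coincide.
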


\emph{Question:}  Under what circumstances is it true that if the largest
congruence of $\S$ is compact, then $\op{Con}\,\S \cong \op{Con}\,\T$
for some $\T$ with both 0 and 1?

Now consider a semilattice with operators $\S$ that has 0 and 1.  
Let $\theta$ be any congruence
on $\S$, and let $F$ be the order-filter $1/\theta$.  Then $F$ satisfies
\[ (*) \quad (\forall f \in \msc F)(\forall a,b \in F)(\forall s \in S)
   \  fa+s \in F \!\implies\! fb+s \in F  .\]
For any such order filter $F$, there is an interval $[\varphi(F),\psi(F)]$
in $\op{Con}\,\S$ of congruences $\theta$ such that $F=1/\theta$.
The congruences $\varphi(F)$ and $\psi(F)$ can be described thusly.
\begin{itemize}
\item $(x,y) \in \varphi(F)$ if $x=y$ or there are a sequence $x=x_0, x_1,
\dots, x_n=y$, operators $f_i \in \msc F$, elements $a_i$, $b_i \in F$
and $s_i \in S$ such that 
\begin{align*}
   x_i &= f_ia_i+s_i \\
   x_{i+1} &= f_ib_i+s_i .
\end{align*}
\item $(x,y) \in \psi(F)$ if for all $f \in \msc F$ and all $s \in S$,
\[   fx+s \in F \quad\text{iff}\quad fy+s \in F .\]
\end{itemize}
As before, these functions can be extended to $\varphi(x)$ and $\psi(x)$,
and $\varphi$ is an interior operator on $\op{Con}\,\S$.  These operations 
should be useful, but at this point it is not clear how.

\section{Summary and questions to pursue}

Our results can be reasonably summarized, and compared to previous results
on lattices of quasivarieties, by considering four classes of lattices.
\begin{itemize}
\item $\msc Q$ is all $Q$-lattices $L_q(\msc K)$ for a quasivariety $\msc K$.
\item $\msc C^d$ is all duals of $\op{Con}\,(\S,+,0,\msc F)$ with $\msc F$
a set of operators.
\item $\msc S$ is all $\op{Sp}(\A,\vare)$ with $\A$ algebraic and $\vare$
a Brouwerian, filterable, continuous quasi-order.
\item $\msc J$ is all join semidistributive, atomic, dually algebraic 
lattices supporting an equaclosure operator satisfying the duals of 
conditions (I1)--(I9) from Part~I.
\end{itemize}
Each of the latter three classes contains $\msc Q$, and thus
provides a different type of description of lattices of quasivarities.  
Part~I established the inclusion $\msc Q \subset \msc C^d$, and derived
some of its consequences.
This interpretation led to the identification of new properties of the natural
equaclosure operator on $\L_q(\msc K)$.

This paper has focused on the reverse problem of representing congruence 
lattices of semilattices with operators as lattices of quasi-equational 
theories.
\begin{itemize}
\item The representation of the congruence lattice $\op{Con}(\S,+,0)$
of a semilattice with no operators as a lattice of quasi-equational theories 
becomes quite transparent with this viewpoint.
\item Every congruence lattices $\op{Con}(\S,+,0,\msc G)$, where $\S$ has 
both $0$ and $1$ and $\msc G$ is a group of operators fixing both $0$ and $1$, 
can be represented as a lattice of quasi-equational theories.
\item There is a finite lattice of quasi-equational theories that is not
an upper bounded lattice.
\end{itemize}
A gap in our current understanding is that we have not found an effective
way to deal with condition (I8) for equa-interior operators, the existence 
of a pseudo-one, which can fail in the congruence lattice of a semilattice 
with operators.  

The older representations of $Q$-lattices as lattices of algebraic sets
had problems with the duals of conditions (I8) and (I9) from Part~I.  
In that sense, the new representation may be preferable.  
But it is not clear at all that $\msc C^d$ and $\msc S$ are comparable:
we don't know if the relation determining the congruence lattice of a 
semilattice $\S$ with operators as a complete sublattice of 
$\op{Sp}(\op{Con}\,\S)$ is continuous.  (See Appendix I of Part I.)
Since continuity is the only issue, it is true that if $\S$ is finite,
then the dual of $\op{Con}(\S,+,0,\msc F)$ is in $\msc S$.

There is no reason to think that the properties describing $\msc J$
actually characterize $Q$-lattices, but we believe that they summarize
what is known at this point.  That leaves us with some interesting 
questions.

\begin{enumerate}
\item Given a lattice $\L = \op{Con}(\S,+,0,\msc F)$, when can we represent
$\L$ as the lattice of theories of a quasivariety?  
\item In particular, can we represent the dual leaf (Figure~\ref{figx1}) as 
the lattice of theories of a quasivariety?
\item Given a finite meet semidistributive lattice $\L$ with an equa-interior
operator, when can we represent $\L$ as either the congruence lattice
of a semilattice with operators or the lattice of theories of a
quasivariety?   
\item Find an algorithm to determine whether a finite meet semidistributive
lattice supports an equa-interior operator satisfying (I1)--(I9).  
(We have done this for the original conditions; see~\cite{AN}.)
\item We know that the variety given by the law $x \approx y$ plays a special
role in the lattice of quasivarieties.  Find a good description of this 
behavior in the context of semilattices with operators.
\end{enumerate}

The authors would again like to thank the referee for many 
helpful comments.

\end{document}